\numberwithin{equation}{section} 
\def\makeCal#1{%
\expandafter\newcommand\csname c#1\endcsname{\mathcal{#1}}}
\def\makeBB#1{%
\expandafter\newcommand\csname b#1\endcsname{\mathbb{#1}}}
\def\makeFrak#1{%
\expandafter\newcommand\csname f#1\endcsname{\mathfrak{#1}}}
\def\makeScr#1{%
\expandafter\newcommand\csname s#1\endcsname{\mathscr{#1}}}
\edef\y{\@Alph\count@}%
\newcommand{\Alg}{\mathbf{Alg}}
\newcommand{\CDGA}{\mathbf{CDGA}}
\DeclareMathOperator*{\colim}{co{\lim}~}
\newcommand{\Com}{\mathbf{Com}}
\newcommand{\Gp}{\mathbf{Gp}}
\DeclareMathOperator{\hcolim}{hocolim~}
\DeclareMathOperator{\Ho}{Ho}
\newcommand{\Sch}{\mathbf{Sch}}
\newcommand{\Set}{\mathbf{Set}}
\DeclareMathOperator{\Spec}{Spec~}
\newcommand{\Top}{\mathbf{Top}}
\newcommand{\Vect}{\mathbf{Vect}}
\theoremstyle{plain}
\newtheorem{thm}{Theorem}[section]
\newtheorem{cor}{Corollary}[thm]
\newtheorem{lem}[thm]{Lemma}
\newtheorem{prop}[thm]{Proposition}
\theoremstyle{definition}
\newtheorem{defn}{Definition}[section]
\newtheorem{eg}[thm]{Example}
\theoremstyle{remark}
\newtheorem{rmk}{Remark}
\title{Commuting Schemes of Upper Triangular Matrices and Representation Homology}
\author{Guanyu Li}
\date{\today}
\begin{document}


\begin{abstract}
We establish a connection between generalised commuting schemes $C_g(U_n)$ of higher genus $g$, which are associated with a group scheme $U_n$ consisting of upper triangular unipotent matrices, and the representation homology $HR_*(\Sigma_g,U_n)$ of a Riemann surface $\Sigma_g$ with coefficients in the group $U_n$.
As an outcome, we provide a numerical criterion for determining whether commuting schemes $C(U_n)$ form a complete intersection.
\end{abstract}

\maketitle

\section{Introduction}

The commuting pairs of matrices form an affine scheme which is called the commuting scheme.
These schemes have been extensively studied in relation to different areas of mathematics.
In particular, much work has been done on the geometric properties of commuting schemes.

Motzkin and Taussky \cite{MT55} and Gerstenhaber \cite{G61} proved that for any $n\in\mathbb{N}$ the classical variety of all the pairs $(A,B)$ of $n\times n$ matrices over $k$ such that $[A,B] = 0$ is irreducible.
In 1979, Richardson \cite{R69} proved that for a reductive group $G$ (resp. a reductive Lie algebra $\mathfrak{g}$) over an algebraically closed field of characteristic $0$, the commuting scheme of $G$ (resp. $\mathfrak{g}$) is irreducible.
This result generalises from the classical variety of all pairs of matrices to the variety of pairs of matrices of certain types.
Later, Baranovsky \cite{B01} and Basili \cite{Bas03} established the irreducibility of the commuting scheme of $n\times n$ nilpotent matrices, under mild conditions on the characteristic of the field $k$.

Properties other than irreducibility have also been studied.
For instance, in \cite{Bas17}, Basili gave some properties of being a complete intersection of commuting pairs.
Knutson \cite{K05} shows that the commuting variety of $\mathfrak{gl}_n$ is an irreducible component of a complete intersection called the diagonal commutator scheme.
Keeton \cite{K96} studied the question when a commuting scheme is a complete intersections.

The commuting scheme associated to a group scheme is a special case of the so-called representation scheme $\mathrm{Rep}_G(\Gamma)$, which is a scheme parametrizing all representations of a group $\Gamma$ in $G$.
While this is a useful generalisation, representation schemes do not naturally behave well. First, the schemes are generally very singular, and hence one needs to resolve the singularities of $\mathrm{Rep}_G(\Gamma)$. Second, in practice, the group $\Gamma$ is generally to be taken the fundamental group of a space such as (compact orientable) surfaces, hyperbolic 3-manifolds and knot complements in $S^3$, where all the spaces mentioned here are aspherical.
For more general spaces, however, one needs to take into account a higher homotopy information of the space.

Derived algebraic geometry offers a possible solution to remedy the deficiencies of representation schemes. A derived version of the representation schemes, representation homology, was introduced in \cite{BRY22}.
Notably, earlier versions of representation homology were developed for associative algebras and Lie algebras, introduced respectively in \cite{BKR13} and \cite{BFPRW17}.

Associated to a topological space $X$ and a group scheme $G$, the derived representation scheme, defined in the most approachable way, is the \emph{non-abelian} derived functor (in the sense of \cite{Q67}) of the representation scheme, and representation homology is the homotopy group of its simplicial coordinate ring.
In particular, the $0$-th homotopy group gives the classical coordinate ring of $\mathrm{Rep}_G(\pi_1(X))$.
Behaving like abelian derived functors such as $\mathrm{Tor}$, representation homology is independent of the choice of resolutions, though the existence and calculations rely on resolutions heavily.

Although there are many other equivalent definitions, for instance the derived local system (see \cite{K01}) and the mapping stack interpretation (see \cite{TV08} and \cite{BRY19}*{Appendix}), the construction in \cite{BRY22} is actually computable.
This terminology is motivated by the fact (see \cite{BRY22}) that $ {HR}_\ast(X,G) $ admits an elementary definition parallel to the classical definition of ordinary homology $ H_*(X,A)$ of $X$ with coefficients in an abelian group $A$ and the Loday-Pirashvili definition of higher Hochschild homology $ HH_*(X,R)$ with coefficients in a commutative ring $R$.

It is generally accepted that derived algebraic geometry is powerful, and a derived object contains enormous information.
However, there seems to be a gap between derived algebraic geometry and classical algebraic geometry: relatively few examples demonstrate how the computations of derived objects give geometric data of classical algebro-geometric objects.
One such case relates the vanishing of the representation homology of noncommutative complete intersection associative algebra $A$ to the classical representation scheme $\mathrm{Rep}_n(A)$ being complete intersections (\cite{BFR14}*{Theorem 24}).

The main result of this paper relates the properties of representation homology for groups to the classical geometric properties of commuting schemes. We have

\begin{thm}[\Cref{Proposition_UnipotnetGp}, \Cref{Thm_List} and \Cref{Proposition_U6NonCI}]\label{Thm_Main}
Let $U_n$ be the algebraic group of $n\times n$ upper triangular unipotent matrices over a characteristic $0$ field $k$, and let $C_g(U_n)$ be its commuting scheme of genus $g$ (see \Cref{Def_CommSchHigherGenus}).
Then $C_g(U_n)$ is a (global) complete intersection if and only if the representation homology vanishes in degrees greater or equal than the size of matrices, i.e.
\begin{equation}\label{Eq_Vanishing}
HR_i(\Sigma_g,U_n)=0\;\;\;\forall i\geq n,
\end{equation}
where $\Sigma_g$ is the Riemann surface of genus $g$.
In the $g=1$ case, condition \ref{Eq_Vanishing} holds for $n\leq5$, so that $C(U_2),C(U_3),C(U_4),C(U_5)$ are complete intersections.
In contrast, $C(U_n)$ is not a complete intersection for $n\geq6$.
\end{thm}

In the literature, people have done calculations of representation homology with reductive group coefficients, for example \cite{BRY19}*{Conjecture 1.3}.
To our knowledge, there are very few results with unipotent coefficients.
As \Cref{Thm_Main} indicates, they are very different from the other known cases.

\subsection{Organisation of the paper}

The paper consists of two main sections :
In \Cref{Sec_Pre}, we collect necessary material about commuting schemes, and tools to analyse their geometry.
We then briefly summarise the construction and basic properties of representation homology, and set up background for computations.
In \Cref{Sec_Main}, we present our main results in \Cref{Thm_List}, and the main technical tools in \Cref{Proposition_UnipotnetGp} and \Cref{Proposition_BorelGp} with their proofs.
In particular, with the help of \verb|Macaulay2| (\cite{M2}), we can compute the representation homology $HR_*(\Sigma_g,U_n)$ for small $n$, implying that for natural numbers $n\leq5$, $C(U_n)$ is a complete intersection.
One might expect that this list of complete intersections goes further, however it is not true.
As a counterexample, we prove \Cref{Proposition_U6NonCI}, which shows that $C(U_6)$ is not a complete intersection.

\subsection{Notation and conventions}

Throughout this paper, $k$ will denote a field of characteristic $0$ and $\Vect_k$ is the category of $k$-vector spaces. Unless specified, the tensor product $\otimes$ will be over $k$.

By a $k$-algebra $R$, we mean a \emph{commutative} unital ring $R$ over $k$. We shall denote by $\Alg_k$ the category of $k$-algebras. We will always use homological convention, namely the differentials are always of degree $-1$. A DG algebra is a positive graded \emph{commutative} algebra with compatible differential maps, i.e.
\begin{displaymath}
xy=(-1)^{\deg x\deg y}yx
\end{displaymath}
and
\begin{equation}\label{Eq_GradedLeibnitz}
d(xy)=d(x)y+(-1)^{\deg x}xd(y)
\end{equation}
for homogenerous elements $x,y$.
The category of all such DG algebras will be denoted by $\CDGA_k$.

We denote by $\Top$ and $\Top_*$ the categories of compactly generated weakly Hausdorff spaces and pointed spaces respectively, by $\Gp$ the category of groups, and $\Sch_k$ the category of schemes over $\Spec k$.

Given any category $\cC$, denote by $s\cC$ the category of simplicial objects of $\cC$, i.e. $s\cC:=\mathrm{Funct}(\bf\Delta^{\mathrm{op}},\cC)$.
For results about simplicial sets and model categories, we refer to \cite{GJ09}.

We shall not distinguish between a simplicial set $X$ and its geometric realization $|X|$ (see \Cref{Eq_Simplicial}).

\subsection{Acknowledgment}
I would like to thank my advisor Yuri Berest first, who proposed the problem, and ask me many valuable questions which help improve my understanding.
I would also like to thank Mike Stillman, who helped me 
with building up the \verb|Macaulay2| packages, and analysing the pathology that came along the computations.

\section{Preliminaries}\label{Sec_Pre}

In this section, we first recall some results about Koszul complexes and regular sequences.
Then we recall definitions and properties of affine group schemes and their commuting schemes.
After that we discuss briefly about the representation scheme and its derived version - representation homology, which is first introduced in \cite{BKR13}.

\subsection{Koszul complex and codimension}

All the definitions and results presented in this section are standard (for instance, see \cites{Eisenbud95,Matsumura89}), however, we think that it would still be helpful to gather them together here for later use in the paper.

\begin{defn}
Given a commutative ring $R$ and elements $f_1,\cdots,f_r$ in $R$, the {\bf Koszul complex} (or {\bf Koszul algebra}) is the (commutative) DG algebra defined by
\begin{enumerate}
\item the underlying graded algebra is
\begin{displaymath}
\bigwedge_R\left(\bigoplus_{i=1}^rR\cdot t_i\right)
\end{displaymath}
where $t_i$ is of (homological) degree $1$ for all $i$,
\item the differential satisfies
\begin{displaymath}
d(t_i)=f_i
\end{displaymath}
and is extended by the graded Leibniz rule \Cref{Eq_GradedLeibnitz}.
\end{enumerate}
We denote by $\mathrm{Kos}^R_*(f)$ the Koszul complex of $f:=\{f_1,\cdots,f_r\}$. When the ambient ring $R$ is clear, we simply write $\mathrm{Kos}_*(f)$.
\end{defn}

Since any $\mathrm{Kos}^R_*(f)$ is naturally a DG algebra, its homology is a graded algebra.

\begin{lem}\label{Lemma_KoszulZeroSubseq}
Let $f=\{f_1,\cdots,f_r\}$ be a sequence in a $k$-algebra $R$, let $g=\{g_1=0,\cdots,g_t=0\}$ be the zero sequence. Then
\begin{enumerate}
\item $\mathrm{Kos}_*(g)\cong R\otimes\Lambda_k[\xi_1,\cdots,\xi_t]\cong \Lambda_R[\xi_1,\cdots,\xi_t]$ as vector spaces, and the differentials are $0$.
\item
\begin{displaymath}
\mathrm{Kos}_*(f\cup g)=\mathrm{Kos}_*(f)\otimes_R\mathrm{Kos}_*(g).
\end{displaymath}
\item
\begin{displaymath}
H_*(\mathrm{Kos}(f\cup g))\cong H_*(\mathrm{Kos}(f))\otimes\Lambda_k[\xi_1,\cdots,\xi_t].
\end{displaymath}
\end{enumerate}
\end{lem}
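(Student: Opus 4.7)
The plan is to prove the three parts sequentially, since (2) and (3) both build on (1). Part (1) is essentially unwinding the definition: the Koszul complex on $g$ has underlying graded algebra the free exterior $R$-algebra on $\xi_1,\ldots,\xi_t$, and by definition $d(\xi_i)=g_i=0$, so the graded Leibniz rule \Cref{Eq_GradedLeibnitz} forces $d$ to vanish on every homogeneous product. The identification $\Lambda_R[\xi_1,\ldots,\xi_t]\cong R\otimes_k\Lambda_k[\xi_1,\ldots,\xi_t]$ is the standard base-change property of the exterior algebra on a free module, and transports the zero differential across.

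For part (2), I would observe that both sides have underlying graded algebra the free exterior $R$-algebra on the $r+t$ generators $t_1,\ldots,t_r,\xi_1,\ldots,\xi_t$. On the left, the Koszul differential of $f\cup g$ sends $t_i\mapsto f_i$, $\xi_j\mapsto 0$, and is extended by the graded Leibniz rule. On the right, the tensor-product differential on $\mathrm{Kos}_*(f)\otimes_R\mathrm{Kos}_*(g)$ is defined by the same rule on generators and the graded Leibniz rule. Since a differential on a free graded-commutative algebra is determined by its values on algebra generators, the two differentials coincide, giving the claimed equality of DG $R$-algebras.

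Part (3) is a Künneth-type computation. By part (2), it suffices to compute $H_*\bigl(\mathrm{Kos}_*(f)\otimes_R\mathrm{Kos}_*(g)\bigr)$. The key observation is that $\mathrm{Kos}_*(g)$ is a bounded complex of free $R$-modules (an exterior algebra on finitely many generators over $R$) with zero differential by (1). Hence tensoring with $\mathrm{Kos}_*(g)$ over $R$ is exact, and the Künneth formula gives
\[
H_*\bigl(\mathrm{Kos}_*(f)\otimes_R\mathrm{Kos}_*(g)\bigr)\;\cong\;H_*(\mathrm{Kos}_*(f))\otimes_R H_*(\mathrm{Kos}_*(g))\;=\;H_*(\mathrm{Kos}_*(f))\otimes_R \Lambda_R[\xi_1,\ldots,\xi_t],
\]
and rewriting $\Lambda_R[\xi_\bullet]\cong R\otimes_k\Lambda_k[\xi_\bullet]$ via part (1) yields the claim.

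There is no real obstacle; everything is formal once one is careful about two points. First, in (2) one should check that signs in the graded Leibniz rule match between the tensor-product DG algebra structure and the Koszul definition, which is the reason for using graded commutativity with degree-$1$ generators. Second, in (3) the appeal to Künneth must use the freeness (hence flatness) of $\mathrm{Kos}_*(g)$ as an $R$-module complex; since the differential on $\mathrm{Kos}_*(g)$ is identically zero this is particularly clean and no $\mathrm{Tor}$ correction terms appear.
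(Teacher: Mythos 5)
Your proof is correct and follows essentially the same route as the paper: parts (1) and (2) are treated as direct unwinding of definitions (the paper simply calls them immediate), and part (3) invokes the Künneth short exact sequence together with the observation that the $\mathrm{Tor}$ terms vanish because $\mathrm{Kos}_*(g)$ is a complex of free $R$-modules with zero differential.
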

\begin{proof}
The first two are immediate.
By K\"unneth formula (\cite{Weibel94}*{Theorem 3.6.3}) there is a short exact sequence
\begin{displaymath}
0\to\bigoplus_{p+q=n}H_p(P_\bullet)\otimes_RH_q(Q_\bullet)\to H_n((P\otimes_R Q)_\bullet)\to\bigoplus_{p+q=n-1}\mathrm{Tor}_1^R(H_p(P_\bullet),H_q(Q_\bullet))\to0
\end{displaymath}
where $P_\bullet$ is taken $\mathrm{Kos}_*(g)$ and $Q_\bullet$ is taken $\mathrm{Kos}_*(f)$. By Part (1), $\mathrm{Tor}_1^R(H_p(P_\bullet),H_q(Q_\bullet))$ is always $0$, hence we have the desired isomorphism.
\end{proof}

Let's then recall the notion of regular sequences:

\begin{defn}
Let $R$ be a commutative ring.
A sequence of elements $\{f_1,\cdots,f_r\}\subseteq R$ is called a \textbf{regular sequence} if
\begin{itemize}
\item $(f_1,\cdots,f_r)\neq R$, and
\item For $i=1,\cdots,r$, $f_i$ is a nonzerodivisor on $\frac{R}{(f_1,\cdots,f_{i-1})}$.
\end{itemize}
\end{defn}

And Koszul complex is highly related to the notion of regular sequence:

\begin{thm}\label{Thm_CA}
Given homogeneous elements $f_1,\cdots,f_r\in k[x_1,\cdots,x_n]$ ($\deg x_i$ is arbitrary) of positive degrees, denote the sequence by $f=\{f_1,\cdots,f_r\}$, then the following are equivalent:
\begin{enumerate}
\item $\{f_1,\cdots,f_r\}$ form a regular sequence.
\item $\mathrm{codim}~(f_1,\cdots,f_j)=j$ for $1\leq j\leq r$.
\item $\mathrm{codim}~(f_1,\cdots,f_r)=r$.
\item $H_j(\mathrm{Kos}_*(f))=0$ for $1\leq j\leq r$.
\item $H_1(\mathrm{Kos}_*(f))=0$.
\end{enumerate}
\end{thm}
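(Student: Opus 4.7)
The plan is to route all five equivalences through the two main theorems already collected in this section: Theorem \ref{Thm_CMLocalRegSeqEquiv}, which handles the codimension statements in a Cohen--Macaulay local ring, and Theorem \ref{Thm_HomogCaseRegSeqEquiv}, which handles the Koszul-homology statement in the graded setting. Concretely I would prove the combined cycle
\begin{displaymath}
(1) \Rightarrow (4) \Rightarrow (5) \Rightarrow (1) \Rightarrow (2) \Rightarrow (3) \Rightarrow (1).
\end{displaymath}

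The Koszul-homology portion is short. The implication (1) $\Rightarrow$ (4) is immediate from Proposition \ref{Proposition_HomologyOfRegularSeq} with $M = R$; (4) $\Rightarrow$ (5) is tautological; and (5) $\Rightarrow$ (1) is exactly clause (b) of Theorem \ref{Thm_HomogCaseRegSeqEquiv}, which applies because $R = k[x_1,\ldots,x_n]$ is $\mathbb{N}$-graded and each $f_i$ is homogeneous of positive degree.

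For the codimension cycle, (1) $\Rightarrow$ (2) follows by combining the obvious fact that any initial segment of a regular sequence is itself regular (so has depth $\geq j$) with Proposition \ref{Proposition_GeneralCodimAndDepth} and Krull's principal ideal theorem \ref{Thm_KrullPIT}, to obtain the string $j \leq \mathrm{depth}(f_1,\ldots,f_j) \leq \mathrm{codim}(f_1,\ldots,f_j) \leq j$. The implication (2) $\Rightarrow$ (3) is trivial.

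The main obstacle, and the only step where the Cohen--Macaulay hypothesis is used in an essential way, is the closing implication (3) $\Rightarrow$ (1). My strategy is to reduce to the local CM setting by localizing at the irrelevant ideal $\mathfrak{m} := (x_1,\ldots,x_n)$. First, $R$ is Cohen--Macaulay (being a polynomial ring), hence so is $R_\mathfrak{m}$. Second, since $(f_1,\ldots,f_r)$ is a graded ideal, all its minimal primes are graded, and every graded prime of $R$ is contained in $\mathfrak{m}$; therefore $\mathrm{codim}\,(f)_\mathfrak{m} = \mathrm{codim}\,(f) = r$ in $R_\mathfrak{m}$. Theorem \ref{Thm_CMLocalRegSeqEquiv}(2) then guarantees that $\{f_1,\ldots,f_r\}$ is a regular sequence in $R_\mathfrak{m}$, and Proposition \ref{Proposition_HomologyOfRegularSeq} gives $H_j(\mathrm{Kos}_*^{R_\mathfrak{m}}(f)) = 0$ for $j > 0$. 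To descend this vanishing back to $R$, I would use the fact that $H_1(\mathrm{Kos}_*^R(f))$ is a finitely generated non-negatively graded $R$-module with $R_0 = k$: any such module must vanish once its localization at $\mathfrak{m}$ vanishes, since for a homogeneous element $m$ of minimal degree in the support, a relation $sm = 0$ with $s = s_0 + s_{>0}$ and $s_0 \in k^\times$ forces $s_0 m = 0$ in the lowest graded piece. This yields (5), and (5) $\Rightarrow$ (1) was already established, closing the cycle.
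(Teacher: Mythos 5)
Your proof follows essentially the same path as the paper's — the same six implications, invoking the same cited results — so the approaches are not genuinely different. However, you are in fact more careful than the paper at the closing implication from (3). After deducing $H_1(\mathrm{Kos}_*(f)\otimes_R R_{\mathfrak{m}})=0$ from Theorem~\ref{Thm_CMLocalRegSeqEquiv}, the paper concludes with ``Since localisation is exact, $H_1(\mathrm{Kos}_*(f))=0$.'' Exactness of localisation only gives $H_1(\mathrm{Kos}_*(f))_{\mathfrak{m}}=0$; on its own this does not force $H_1(\mathrm{Kos}_*(f))$ to vanish. Your proposal supplies precisely the missing step: $H_1(\mathrm{Kos}_*(f))$ is a finitely generated $\mathbb{N}$-graded $R$-module, and a nonzero such module cannot have vanishing localisation at the irrelevant maximal ideal, as your lowest-degree-piece argument shows. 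Likewise, you make explicit why $\mathrm{codim}\,(f)_{\mathfrak{m}}=\mathrm{codim}\,(f)$ (minimal primes of a graded ideal are graded and hence contained in $\mathfrak{m}$), whereas the paper invokes the blanket identity ``$\mathrm{codim}~I=\mathrm{codim}~I_{\mathfrak{p}}$ for any prime $\mathfrak{p}\supseteq I$,'' which is false in general (only $\leq$ holds). One caveat applying to both your argument and the paper's: the theorem permits $\deg x_i$ to be arbitrary, but both the graded-Nakayama descent and the graded-minimal-prime step tacitly use $R_0=k$, i.e.\ all $\deg x_i>0$; this is the situation that actually arises in the paper's applications, so the assumption should probably be made explicit in the statement.
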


\Cref{Thm_CA} combines several general results.
The proofs can be found in \cite{Eisenbud95}*{Theorem 10.2}, \cite{Eisenbud95}*{Corollary 17.5}, \cite{Eisenbud95}*{Proposition 18.2}, \cite{Matsumura89}*{Theorem 16.5, part ii} and \cite{Matsumura89}*{Theorem 17.4}.

\begin{cor}\label{Cor_Subseq}
Within the same setting, any subsequence of $f$ with any order is a regular sequence.
\end{cor}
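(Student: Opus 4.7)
The plan is to reduce to the equivalence (5) $\Leftrightarrow$ (1) in \Cref{Thm_CA}, exploiting the fact that condition (5) is manifestly independent of the order of the generators. Concretely, for any permutation $\tau$ of $\{1,\ldots,r\}$, relabeling the exterior generators by $t_i\mapsto t_{\tau(i)}$ yields a DG $R$-algebra isomorphism
\begin{displaymath}
\mathrm{Kos}_*(f_{\tau(1)},\ldots,f_{\tau(r)})\cong\mathrm{Kos}_*(f_1,\ldots,f_r),
\end{displaymath}
so vanishing of $H_1$ of the Koszul complex is preserved under reordering of the sequence. In particular, condition (5) of \Cref{Thm_CA} holds for every permutation of $f$ as soon as it holds for $f$ itself.

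Given an arbitrary sub-sequence $\sigma=(f_{i_1},\ldots,f_{i_s})$ in any specified order, I would first complete it to a full permutation $\tilde{f}=(f_{i_1},\ldots,f_{i_s},f_{j_1},\ldots,f_{j_{r-s}})$ of $f$ by appending the remaining $f_j$'s in any order. By the hypothesis (and the equivalence in \Cref{Thm_CA} applied to $f$), $H_1(\mathrm{Kos}_*(f))=0$, and by the reordering observation above we also have $H_1(\mathrm{Kos}_*(\tilde{f}))=0$. Applying \Cref{Thm_CA} in the direction (5) $\Rightarrow$ (1) to the (still homogeneous, positive-degree) sequence $\tilde{f}$ shows that $\tilde{f}$ is a regular sequence. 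Since any initial segment of a regular sequence is itself a regular sequence (immediate from the definition of regular sequence), $\sigma$ is regular.

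I do not anticipate any serious obstacle here: the entire content of the corollary is the permutation-invariance of the Koszul-homology criterion, which is formal once one has \Cref{Thm_CA} in hand. The only thing to be pedantic about is that the Koszul complex depends on its defining sequence only as an \emph{unordered} family of elements, up to canonical DG isomorphism, which is what powers the reduction to an initial segment.
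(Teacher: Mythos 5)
Your proposal is correct and matches the paper's own argument: both reduce to the permutation-invariance of $H_1$ of the Koszul complex via \Cref{Thm_CA}, then reorder so the given subsequence becomes an initial segment of a full regular sequence. The paper states this more tersely, but the logic is identical.
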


In terms of geometry, we also have the notion

\begin{defn}
We say that an algebra $R$ of finite type over $k$ is a \textbf{global complete intersection} if there exists \emph{a} presentation $\displaystyle R=\frac{k[x_1,\cdots,x_n]}{(f_1,\cdots,f_r)}$ such that $\dim(R)=n-r$.
\end{defn}

As an immediate consequence of \Cref{Thm_CA}, we have the following corollary, which is true basically by the properties of polynomial algebras:

\begin{cor}\label{Cor_CI}
Let $X$ be an affine subscheme of $\mathbb{A}^n_k$ of dimension $n-r$. Suppose there is a presentation $\displaystyle\Spec\frac{k[x_1,\cdots,x_n]}{(f_1,\cdots,f_r)}$ where $f_j$ are homogeneous of positive degrees (again $\deg x_i$ is arbitrary), then $X$ is a complete intersection if and only if $\{f_1,\cdots,f_r\}$ is a regular sequence.
\end{cor}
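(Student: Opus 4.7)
My plan is to translate the geometric statement into \Cref{Thm_CA} via the Cohen-Macaulay dimension formula. Write $R = k[x_1,\ldots,x_n]$ and $I = (f_1,\ldots,f_r)$. By the definition of global complete intersection, $X = \Spec(R/I)$ is a complete intersection precisely when $\dim(R/I) = n - r$; using that $R$ is Cohen-Macaulay with $\dim R = n$, this should be equivalent to $\mathrm{codim}(I) = r$. The punchline is then the equivalence $(3) \Leftrightarrow (1)$ of \Cref{Thm_CA}: $\mathrm{codim}(f_1,\ldots,f_r) = r$ iff $\{f_1,\ldots,f_r\}$ is a regular sequence. Chaining the two translations gives the iff in both directions.

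The main obstacle will be justifying the dimension formula $\dim R = \dim(R/I) + \mathrm{codim}(I)$ globally, since \Cref{Thm_CMLocalRegSeqEquiv} is stated only for local rings. I expect the homogeneity hypothesis to save us: each $f_i$ lies in the irrelevant maximal ideal $\mathfrak{m} = (x_1,\ldots,x_n)$, and since $R$ and $R/I$ are finitely generated $\mathbb{N}$-graded $k$-algebras with degree-zero part $k$, their Krull dimensions agree with those of their localizations at $\mathfrak{m}$; similarly $\mathrm{codim}(I) = \mathrm{codim}(I_\mathfrak{m})$ by the remark following the definition of codimension. Localizing at $\mathfrak{m}$ and applying \Cref{Thm_CMLocalRegSeqEquiv} to the Cohen-Macaulay local ring $R_\mathfrak{m}$ should give the needed identity, after which the equivalence of the corollary is a direct consequence of \Cref{Thm_CA}.
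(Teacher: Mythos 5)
Your proposal is correct and follows essentially the same route as the paper's proof: translate complete intersection to $\mathrm{codim}(I)=r$ via the Cohen--Macaulay dimension formula, localize at the irrelevant maximal ideal $\mathfrak{m}=(x_1,\ldots,x_n)$ to apply \Cref{Thm_CMLocalRegSeqEquiv}, and then invoke the equivalence $(1)\Leftrightarrow(3)$ of \Cref{Thm_CA}. If anything, your justification for why dimension and codimension are preserved under localizing at $\mathfrak{m}$ (the graded structure with $R_0 = k$) is more carefully spelled out than the paper's brief ``since $\mathfrak{m}$ is maximal.''
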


\subsection{Group schemes and the grading}

\begin{defn}
An {\bf affine group scheme} $G$ is a group object in the category $\Sch_k$ which is affine as a scheme. We shall denote by $\mathscr{O}(G)$ the coordinate ring of $G$.
\end{defn}

In this paper, we will use the functor of points interpretation of a group scheme, which says a scheme $G\in\Sch_k$ is a group scheme if and only if by viewing it a functor ${\bf Alg}_k\to\Set$, it has a natural factorisation
\begin{displaymath}
{\bf Alg}_k\to\Gp\xrightarrow{U}\Set
\end{displaymath}
where $U$ is the forgetful functor $\Gp\to\Set$. This is equivalent to saying that for each $A\in\Alg_k$, there is a functorial way of assigning the set $G(A)$ a group structure.

\begin{eg}
The basic example is the general linear group $GL_n$, the affine (group) scheme such that for any $k$-algebra $R$,
\begin{displaymath}
GL_n(R):=\left\{M=\begin{bmatrix}a_{1,1}&a_{1,2}&\cdots&a_{1,n}\\
a_{2,1}&a_{2,2}&\cdots&a_{2,n}\\
\vdots&\vdots&\ddots&\vdots\\ a_{n,1}&a_{n,2}&\cdots&a_{n,n}\end{bmatrix}\middle|a_{i,j}\in R,\;\;\det M\in R^\times\right\}.
\end{displaymath}
Its coordinate ring is $k[x_{i,j},\det^{-1}]_{1\leq i,j\leq n}$, with comultiplication
\begin{equation}\label{Eq_Comultiplication}
\Delta(x_{i,j})=\sum_{t=1}^nx_{i,t}\otimes x_{t,j}.
\end{equation}
\end{eg}

We will concentrate on the following two types of group schemes.

\begin{defn}
Let $U_n$ be the affine (group) scheme such that for any $k$-algebra $R$,
\begin{displaymath}
U_n(R):=\left\{\begin{bmatrix}1&a_{1,2}&a_{1,3}&\cdots&a_{1,n}\\
0&1&a_{2,3}&\cdots&a_{2,n}\\
0&0&1&\cdots&a_{3,n}\\
0&0&0&\ddots&\vdots\\ 0&0&0&0&1\end{bmatrix}\middle|a_{i,j}\in R\right\}.
\end{displaymath}
The coordinate ring of $U_n$ is the polynomial algebra $\displaystyle k[x_{i,j}]_{1\leq i<j\leq n}$.
We call it the {\bf unipotent $n\times n$-matrix group scheme}.
We call the matrix $X\in U_n(k[x_{i,j}]_{1\leq i<j\leq n})$ of the form
\begin{equation}\label{Eq_UnMatrixGenerator}
X:=\begin{bmatrix}1&x_{1,2}&x_{1,3}&\cdots&x_{1,n}\\
0&1&x_{2,3}&\cdots&x_{2,n}\\
0&0&1&\cdots&x_{3,n}\\
\vdots&\vdots&\vdots&\ddots&\vdots\\ 0&0&0&\cdots&1\end{bmatrix}
\end{equation}
the generic {\bf coordinate matrix of $U_n$} and also write $\mathscr{O}(U_n)=k[X]:=k[x_{i,j}]_{1\leq i<j\leq n}$.
\end{defn}

\begin{defn}
Similarly, let $B_n$ be the affine (group) scheme such that for any $k$-algebra $R$,
\begin{displaymath}
B_n(A):=\left\{\begin{bmatrix}a_{1,1}&a_{1,2}&a_{1,3}&\cdots&a_{1,n}\\
0&a_{2,2}&a_{2,3}&\cdots&a_{2,n}\\
0&0&a_{3,3}&\cdots&a_{3,n}\\
0&0&0&\ddots&\vdots\\ 0&0&0&0&a_{n,n}\end{bmatrix}\middle|a_{i,j}\in R,\;a_{1,1}\cdots a_{n,n}\in R^\times\right\}.
\end{displaymath}
The coordinate ring is $\displaystyle\frac{k[x_{i,j},t]_{1\leq i\leq j\leq n}}{(t\cdot x_{1,1}\cdots x_{n,n}-1)}$. We call it the {\bf Borel $n\times n$-matrix group scheme}. We call the matrix $\displaystyle X\in B_n\left(\frac{k[x_{i,j},t]_{1\leq i\leq j\leq n}}{(t\cdot x_{1,1}\cdots x_{n,n}-1)}\right)$ of the form
\begin{equation}\label{Eq_BnMatrixGenerator}
X:=\begin{bmatrix}x_{1,1}&x_{1,2}&x_{1,3}&\cdots&x_{1,n}\\
0&x_{2,2}&x_{2,3}&\cdots&x_{2,n}\\
0&0&x_{3,3}&\cdots&x_{3,n}\\
\vdots&\vdots&\vdots&\ddots&\vdots\\ 0&0&0&\cdots&x_{n,n}\end{bmatrix}
\end{equation}
the {\bf coordinate matrix of $B_n$} and write $\displaystyle\mathscr{O}(B_n)=k[X]:=\frac{k[x_{i,j},t]_{1\leq i\leq j\leq n}}{(t\cdot x_{1,1}\cdots x_{n,n}-1)}$.
\end{defn}

\begin{eg}
Consider $G=U_3$, given any $k$-algebra $R$ and two generic matrices $A,B\in U_3(R)$,
\begin{displaymath}
A=\begin{pmatrix}1&a_1&a_3\\ &1&a_2\\ &&1\end{pmatrix},B=\begin{pmatrix}1&b_1&b_3\\ &1&b_2\\ &&1\end{pmatrix}
\end{displaymath}
if we give an internal grading on the matrix so that $\deg a_1=\deg a_2=\deg b_1=\deg b_2=1$ and $\deg a_3=b_3=2$, then in the matrix
\begin{displaymath}
AB=\begin{pmatrix}1&a_1+b_1&a_3+a_1b_2+b_3\\ &1&a_2+b_2\\ &&1\end{pmatrix},
\end{displaymath}
one notices that all elements $a_1+b_1,a_2+b_2,a_3+a_1b_2+b_3$ are homogeneous, of degrees $1,1$ and $2$ respectively, i.e. the matrix product preserves the internal grading.\par
There is a similar internal grading on $G=B_2$.
\end{eg}

In general, one could define the internal grading by
\begin{equation}\label{Eq_Grading}
\deg x_{i,j}=j-i,
\end{equation}
which is a constant in the diagonal direction, and preserved by the multiplication. Since there is only positive degrees, one sets
\begin{displaymath}
\deg0=-\infty.
\end{displaymath}

\begin{lem}\label{Lemma_GroupProperties}
\begin{enumerate}

\item The grading (\ref{Eq_Grading}) makes $\mathscr{O}(U_n)$ and $\mathscr{O}(B_n)$ graded Hopf algebras.

\item\begin{enumerate}
\item The kernel of the conunit map $\epsilon_{U_n}:\mathscr{O}(U_n)\to k$ is generated by a regular sequence $\{x_{i,j}\}_{1\leq i<j\leq n}$.
\item The kernel of the conunit map $\epsilon_{B_n}:\mathscr{O}(B_n)\to k$ is generated by a regular sequence $\{x_{i,j}-\delta_j^i\}_{1\leq i\leq j\leq n}$ where $\displaystyle\delta_j^i=\left\{\begin{matrix}1&i=j\\0&\text{otherwise}\end{matrix}\right.$ is the Kronecker delta.
\end{enumerate}
\end{enumerate}
\end{lem}

\begin{proof}
\begin{enumerate}
\item For $U_n$, the coordinate ring is a polynomial ring and the grading is defined on its generators. So $\mathscr{O}(U_n)$ is a graded algebra. It is easy to see from \Cref{Eq_Comultiplication} and \Cref{Eq_Grading} that the grading is compatible with the comultiplication since the comultiplication is inherited from $GL_n$. Furthermore, any matrix in $U_n(R)$ has a decomposition
\begin{displaymath}
A=I-N
\end{displaymath}
where $I$ is the identity matrix and $N$ is a nilpotent matrix ($N^{n-1}=0$), hence
\begin{displaymath}
A^{-1}=I+N+\cdots+N^{n-2},
\end{displaymath}
i.e. the antipodal map could be interpreted by comultiplications. Concretely,
\begin{displaymath}
S(x_{i,j})=\sum_{k=0}^{n-2}\sum_{1\leq i<t_1<\cdots<t_k<j\leq n-1}(-1)^{k+1}x_{i,t_1}x_{t_1,t_2}\cdots x_{t_k,j},
\end{displaymath}
hence $S$ preserves the grading.\par
For $B_n$, we see by \Cref{Eq_Grading} $\deg x_{i,i}=0$ and we set in addition $\deg t=0$, hence $\mathscr{O}(B_n)$ is a quotient algebra from a polynomial algebra by a homogeneous equation $t\cdot x_{1,1}\cdots x_{n,n}-1$. Thus $\mathscr{O}(B_n)$ is a graded algebra. The grading is compatible with the comultiplication because of the same reason, and so is the antipodal map (or concretely this time $\displaystyle S(x_{i,j})=x_{j,j}^{-1}\sum_{k=0}^{n-2}\sum_{1\leq i<t_1<\cdots<t_k<j\leq n-1}(-1)^{k+1}\frac{x_{i,t_1}}{x_{i,i}}\frac{x_{t_1,t_2}}{x_{t_1,t_1}}\cdots\frac{x_{t_k,j}}{x_{t_k,t_k}}$).

\item Since $\mathscr{O}(U_n)$ is the polynomial ring generated by the elements $\{x_{i,j}\}_{1\leq i<j\leq n}$, the statement is clear for $\mathscr{O}(U_n)$.\par
Notice that
\begin{align*}
\mathscr{O}(B_n)&=\frac{k[x_{i,j},t]_{1\leq i\leq j\leq n}}{(t\cdot x_{1,1}\cdots x_{n,n}-1)}\\
&\cong\frac{k[x_{i,i},t]_{1\leq i\leq n}}{(t\cdot x_{1,1}\cdots x_{n,n}-1)}[x_{i,j}]_{1\leq i<j\leq n}\\
&\cong\left(\bigotimes_{1\leq i\leq n}k[x_{i,i},x_{i,i}^{-1}]\right)[x_{i,j}]_{1\leq i<j\leq n}
\end{align*}
where the tensor product is taken over $k$. Hence, the statement is also clear for $\mathscr{O}(B_n)$.
\end{enumerate}
\end{proof}

\subsection{Commuting schemes and higher genus generalisations}

Let $G$ be an affine group scheme over $k$ with coordinate ring $\mathscr{O}(G)$.

Denote by $C(G)$ the {\bf commuting scheme} of $G$, and in terms of functor of points, it is the affine scheme defined by
\begin{align*}
C(G):{\bf Alg}_k&\to\Set\\
R&\mapsto\{(a,b)\in G(R)\times G(R)\mid ab=ba\},
\end{align*}
where with the notation $[a,b]:=aba^{-1}b^{-1}$, one could also write the set
\begin{displaymath}
C(G)(R)=\{(a,b)\in G(R)\times G(R)\mid[a,b]=1\}.
\end{displaymath}
The scheme $C(G)$ is actually affine for the following reason: it is the closed subscheme of $G\times G$ cut out by the equation
\begin{displaymath}
xyx^{-1}y^{-1}=e,
\end{displaymath}
where this equation is an algebraic equation since the group multiplication and inverse are algebraic in the group scheme $G$. Its coordinate ring will be in \Cref{Eq_GeneralCommutingCoordinateRing}.

\begin{rmk}
Although historically we call the object a commuting variety, it really should be called a commuting scheme without irreducibility and reducedness.
\end{rmk}

\begin{eg}\label{Eg_MatrixGenerator}
The group scheme $GL_n$ is a matrix group scheme, in the sense that there is a matrix $X$ of symbols (variables over $k$ or elements in $k$), satisfying
\begin{enumerate}
\item For any $k$-algebra $R$, $GL_n(R)$ is the set of matrices by evaluating $X$ in $R$, and
\item the multiplication in $GL_n(R)$ is exactly the matrix multiplication of evaluations of $X$.
\end{enumerate}
We say an affine group scheme $G$ has a matrix generator, if $G$ is a closed subgroup of $GL_n$. Then the inclusion $i:G\hookrightarrow GL_n$ gives a matrix generator $i^*X$ of $G$ in terms of the conditions just above:
\begin{enumerate}
\item For any $k$-algebra $R$, $G(R)$ is the set of matrices by evaluating $i^*X$ in $R$, and
\item the multiplication in $G(R)$ is exactly the matrix multiplication of evaluations of $i^*X$.
\end{enumerate}
In this case, we shall denote by $k[X]$ the coordinate ring of $G$.\par
Now if $G$ is a group scheme with a matrix generator, then the coordinate ring of $C(G)$ is
\begin{displaymath}
\frac{k[X,Y]}{(XYX^{-1}Y^{-1}-I)},
\end{displaymath}
where $k[X,Y]=k[X]\otimes_kk[Y]$ is the coordinate ring of $G\times G$, and $XYX^{-1}Y^{-1}=I$ is the set of $n\times n$ equations, claiming the entries in the matrix $XYX^{-1}Y^{-1}$ are equal to the corresponding entries in $I$.\par
We shall give example to illustrate this:
\begin{enumerate}[label=(\roman*)]
\item Let $G=U_3$, then the matrix generator is $X=\begin{pmatrix}1&x_{1,2}&x_{1,3}\\ 0&1&x_{2,3}\\ 0&0&1\end{pmatrix}$, and
\begin{displaymath}
XYX^{-1}Y^{-1}=\begin{pmatrix}1&0&x_{1,2}y_{2,3}-y_{1,2}x_{2,3}\\ 0&1&0\\ 0&0&1\end{pmatrix},
\end{displaymath}
thus the coordinate ring of $C(U_3)$ is
\begin{displaymath}
\frac{k[X,Y]}{(XYX^{-1}Y^{-1}-I)}=\frac{k[x_{1,2},x_{1,3},x_{2,3},y_{1,2},y_{1,3},y_{2,3}]}{(x_{1,2}y_{2,3}-y_{1,2}x_{2,3})}.
\end{displaymath}
\item Let $G=B_2$, then the matrix generator is $X=\begin{pmatrix}x_{1,1}&x_{1,2}\\ 0&x_{2,2}\end{pmatrix}$, and
\begin{displaymath}
XYX^{-1}Y^{-1}=\begin{pmatrix}1&-x_{1,1}x_{1,2}y_{1,1}^2st+x_{1,1}^2y_{1,1}y_{1,2}st+x_{1,1}x_{1,2}s-y_{1,1}y_{1,2}t\\ 0&1\end{pmatrix},
\end{displaymath}
where $s=(\det X)^{-1}=(x_{1,1}x_{2,2})^{-1}$ and $t=(\det Y)^{-1}=(y_{1,1}y_{2,2})^{-1}$, thus the coordinate ring of $C(B_2)$ is
\begin{displaymath}
\frac{k[X,Y]}{(XYX^{-1}Y^{-1}-I)}=\frac{k[x_{1,1},x_{1,2},x_{2,2},(x_{1,1}x_{2,2})^{-1},y_{1,1},y_{1,2},y_{2,2},(y_{1,1}y_{2,2})^{-1}]}{(-x_{1,1}x_{1,2}y_{1,1}^2st+x_{1,1}^2y_{1,1}y_{1,2}st+x_{1,1}x_{1,2}s-y_{1,1}y_{1,2}t)}.
\end{displaymath}
\end{enumerate}
The coordinate matrix of $U_n$ (resp. $B_n$) is the matrix generator of $U_n$ (resp. $B_n$).
\end{eg}

We conclude this section by generalising the notion of commuting scheme.

\begin{defn}\label{Def_CommSchHigherGenus}
Given an affine group scheme $G$ over $k$ with coordinate ring $\mathscr{O}(G)$ and a positive integer $g\geq1$, {\bf the commuting scheme of genus $g$} is the scheme
\begin{align*}
C_g(G):{\bf Alg}_k&\to\Set\\
R&\mapsto\left\{(a_1,b_1,\cdots,a_g,b_g)\in\prod_{i=1}^{2g} G(R)\mid[a_1,b_1][a_2,b_2]\cdots[a_g,b_g]=1\right\}.
\end{align*}
\end{defn}

Notice that when $g=1$, the higher genus commuting scheme goes back to the classical commuting scheme. The reason why we give it such a name will be in \Cref{Eg_DerivingCommutingVarOfHigherGenus}.

\subsection{Representation schemes}

Given an affine group scheme $G$ over $k$, by viewing it a functor, one has an adjunction
\begin{equation}\label{Eq_BasicAdj}
(-)_G:\Gp\leftrightarrows\Alg_k:G
\end{equation}
where the left functor $(-)_G:\Gp\to\Alg_k$ exists since a group homomorphism only satisfies algebraic equations. To any group $\Gamma$, the scheme $\mathrm{Rep}_G(\Gamma):=\Spec(\Gamma)_G$ associated to the ring $(\Gamma)_G$ is called the \textbf{representation scheme}.

\begin{eg}\label{Eg_RepVars}
By definition, for any group $\Gamma$ and any group scheme $G$,
\begin{displaymath}
\mathrm{Hom}_{\Alg_k}((\Gamma)_G,R)\cong\mathrm{Hom}_{\Gp}(\Gamma,G(R)).
\end{displaymath}
\begin{enumerate}
\item Consider when $\Gamma=1$ the trivial group, then $\mathrm{Hom}_{\Gp}(\Gamma,G(R))$ consists of only one element. Thus $(\Gamma)_G=k$ since $\mathrm{Hom}_{\Alg_k}((\Gamma)_G,R)$ consists of only one element for any $k$-algebra $R$.
\item Consider when $\Gamma=\mathbb{Z}$ the trivial group, then $\mathrm{Hom}_{\Gp}(\Gamma,G(R))\cong G(R)$ and in order to have $\mathrm{Hom}_{\Alg_k}((\Gamma)_G,R)\cong G(R)$ for all $R$, by definition $(\Gamma)_G\cong\mathscr{O}(G)$.
\item Consider when $\Gamma=\mathbb{Z}\times\mathbb{Z}$, then
\begin{displaymath}
\mathrm{Hom}_{\Gp}(\mathbb{Z}\times\mathbb{Z},G(R))=\{(a,b)\in G(R)\times G(R)\mid ab=ba\}.
\end{displaymath}
This means $(\mathbb{Z}\times\mathbb{Z})_G$ is the coordinate ring of $C(G)$. Hence, the representation scheme is a (wide) generalisation of the commuting scheme.
\end{enumerate}
\end{eg}

\begin{eg}
Suppose the group $\Gamma$ is finitely presented, namely it has a presentation $\Gamma\cong\langle x_1,\cdots,x_m\mid r_1,\cdots,r_n\rangle$, and suppose that the group scheme $G$ has a matrix generator $X$ (\Cref{Eg_MatrixGenerator}), then
\begin{equation}\label{Eq_RepVarCoordinateRing}
\Gamma_G\cong\frac{k[X_1,\cdots,X_m]}{(r_1-1,\cdots,r_n-1)},
\end{equation}
where $X_i$ are the matrix generators (so $k[X_1,\cdots,X_m]$ is the coordinate ring of $m$-copies of $G$), and each $r_j-1$ is the set of $n\times n$-expressions in $k[X_1,\cdots,X_m]$, claiming the entries in the matrix $r_j(X_1,\cdots,X_m)$ are equal to the corresponding entries in $I$, where the matrix $r_j(X_1,\cdots,X_m)$ is the expression by replacing $x_i$ in $r_j$ by the coefficient matrix $X_i$.\par
For instance, take $G=GL_2$ and $\Gamma=\mathbb{Z}\times\mathbb{Z}=\langle x,y\mid xyx^{-1}y^{-1}\rangle$, then by \Cref{Eq_RepVarCoordinateRing}
\begin{displaymath}
(\mathbb{Z}\times\mathbb{Z})\cong\frac{k[X,Y]}{(XYX^{-1}Y^{-1}-I)},
\end{displaymath}
which is the coordinate ring of $C(GL_2)$ as stated in \Cref{Eg_MatrixGenerator}.
\end{eg}


\subsection{Representation homology}

Next we extend the adjunction (\ref{Eq_BasicAdj}) to simplicially, getting another adjunction
\begin{equation}\label{Eq_ExtendedAdj}
(-)_G:s\Gp\leftrightarrows s\Alg_k:G.
\end{equation}
Unfortunately, this is not a Quillen pair.
However, one still has

\begin{lem}[{\cite{BRY22}*{Lemma 3.1}}]\label{Lemma_ExistenceOfDerivedFunctor}
The functor $(-)_G$ in (\ref{Eq_ExtendedAdj}) maps the weak equivalences between cofibrant objects in $s\Gp$ to weak equivalences in $s\Alg_k$, and hence has a total left derived functor
\begin{displaymath}
\mathbb{L}(-)_G:\Ho(s\Gp)\to\Ho(s\Alg_k).
\end{displaymath}
As a consequence, the left derived functor $\mathbb{L}(-)_G$ can be computed by $\Gamma\mapsto(Q\Gamma)_G$, where $Q\Gamma$ is a cofibrant replacement of $\Gamma$ in $s\Gp$.
\end{lem}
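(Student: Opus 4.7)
The strategy is to invoke Ken Brown's lemma, which reduces the claim to verifying that $(-)_G$ sends acyclic cofibrations between cofibrant objects of $s\Gp$ to weak equivalences in $s\Alg_k$. Once established, the existence of the total left derived functor $\mathbb{L}(-)_G$ is formal, and the computation by evaluation on a cofibrant replacement is the standard recipe.

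I would use the Quillen model structure on $s\Gp$, in which cofibrant objects are retracts of \emph{almost-free} simplicial groups $X_\bullet$: those equipped with a degeneracy-closed graded subset $B_\bullet\subseteq X_\bullet$ such that each $X_n$ is freely generated by $B_n$. Since $(-)_G$ is a left adjoint, it preserves coproducts; combined with the fact that the free group $F(S)$ is the coproduct $\ast_{s\in S}\mathbb{Z}$ in $\Gp$ and that coproducts in $\Alg_k$ are tensor products over $k$, one obtains a natural isomorphism $(F(S))_G\cong\mathscr{O}(G)^{\otimes S}$. Hence for an almost-free $X_\bullet$ with basis $B_\bullet$, the simplicial algebra $(X_\bullet)_G$ has as underlying graded algebra $n\mapsto\mathscr{O}(G)^{\otimes B_n}$, which is cofibrant in $s\Alg_k$ because $\mathscr{O}(G)$ is smooth over $k$ for $G\in\{U_n,B_n\}$ by \Cref{Lemma_RegSeqGenerators}.

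Now any acyclic cofibration between almost-free simplicial groups is a retract of a transfinite composition of pushouts along the generating acyclic cofibrations $F(\Lambda^n_k)\hookrightarrow F(\Delta^n)$. Since $(-)_G$ preserves all colimits and retracts, it is enough to show that the image of each such generator is a weak equivalence in $s\Alg_k$. Under $(-)_G$, this generator becomes a map of simplicial polynomial algebras obtained by base changing $\mathscr{O}(G)^{\otimes(-)}$ along $\Lambda^n_k\hookrightarrow\Delta^n$. The main obstacle is verifying that this base change is a quasi-isomorphism; I would handle it by combining flatness of $\mathscr{O}(G)$ over the field $k$ with the Dold--Kan and Eilenberg--Zilber theorems, which together ensure that tensoring with a flat simplicial module preserves quasi-isomorphisms at the level of normalized chain complexes.
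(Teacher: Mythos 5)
The paper offers no proof of this lemma; it is cited verbatim from \cite[Lemma 3.1]{BRY22}, so there is no in-paper argument to compare against. Your skeleton---Ken Brown's lemma, reduction to generating acyclic cofibrations, the computation $(F(S))_G\cong\mathscr{O}(G)^{\otimes S}$, and preservation of colimits and retracts by the left adjoint---is the right skeleton and matches the standard argument. However, two steps as written do not go through.

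First, the aside that $(X_\bullet)_G$ is cofibrant in $s\Alg_k$ ``because $\mathscr{O}(G)$ is smooth for $G\in\{U_n,B_n\}$'' is both unnecessary for Ken Brown's lemma and misplaced: the lemma is asserted for an arbitrary affine group scheme $G$, not just $U_n$ or $B_n$, and smoothness of $\mathscr{O}(G)$ neither follows from \Cref{Lemma_RegSeqGenerators} in general nor implies cofibrancy of a levelwise polynomial simplicial algebra; cofibrancy in $s\Alg_k$ requires a degeneracy-closed polynomial basis, which you do get from almost-freeness, not from smoothness.

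Second, and more seriously, the final step is where the real content lies and your proposed tool is the wrong one. You need that the map of simplicial algebras $\mathscr{O}(G)\otimes\Lambda^n_k\to\mathscr{O}(G)\otimes\Delta^n$, where $\mathscr{O}(G)\otimes K$ has $n$-simplices $\mathscr{O}(G)^{\otimes K_n}$, is a weak equivalence. The functor $K\mapsto\mathscr{O}(G)^{\otimes K_\bullet}$ is \emph{not} linear in $K$ (the tensor power changes with the number of simplices), so flatness of $\mathscr{O}(G)$ together with Dold--Kan and Eilenberg--Zilber, which govern tensoring with a \emph{fixed} flat simplicial module, say nothing here. The correct mechanism is that $\Lambda^n_k\hookrightarrow\Delta^n$ is a strong deformation retract of simplicial sets, and the construction $K\mapsto\mathscr{O}(G)\otimes K$ is a simplicial functor (it arises from the tensoring of $\Alg_k$ over $\Set$), hence carries simplicial homotopy equivalences to simplicial homotopy equivalences; these are in particular weak equivalences in $s\Alg_k$. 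Replacing your last paragraph by this simplicial-homotopy argument closes the gap and removes any dependence on the particular $G$.
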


We shall also denote the derived functor by $\mathscr{O}(\mathrm{DRep}_G(\Gamma)):\mathrm{Ho}(s\Gp)\to\mathrm{Ho}(s\Alg_k)$ and call the corresponding (derived) scheme $\mathrm{DRep}_G(\Gamma)$ the {\bf derived representation scheme}.

\begin{rmk}
A very important implication of \Cref{Lemma_ExistenceOfDerivedFunctor} is that the homotopy group of $\mathbb{L}(\Gamma)_G$ for any simplicial group $\Gamma$ does \emph{not} depend on the choice of the cofibrant replacement $Q\Gamma\to\Gamma$.
Thus the representation homology is an algebraic invariant on the simplicial group $\Gamma$ and the group scheme $G$, and we can take approachable resolution for the sake of computation.
Later in this paper we will take a specific cofibrant replacement of the simplicial group $\Gamma$, where $\Gamma$ is given by a topological space $X$, and the resolution is constructed from a particular presentation of that space.
This replacement yields Koszul complexes that can be used for explicit computations (\Cref{Eg_RepHomOfSurfaces}).
\end{rmk}

We recall a few basic facts from simplicial homotopy theory \cite{GJ09}.

There is a Quillen equivalence
\begin{displaymath}
|-|:s\Set\rightleftarrows\Top:S
\end{displaymath}
where $S$ is the (total) singular complex and $|-|$ is the geometric realisation (\cite{GJ09}*{Theorem I.11.4.}). By definition, given any topological space $X$,
\begin{displaymath}
S_n(X):=\mathrm{Hom}_\Top(\Delta^n,X)
\end{displaymath}
where $\displaystyle\Delta^n:=\left\{(x_0,\cdots,x_n)\in\mathbb{R}^{n+1}\mid\sum_{i=0}^nx_i=1,\;x_i\geq0\right\}$ is the standard geometric simplex.

A simplicial set $X$ is called reduced if it has a single vertex, that is, $X_0=\{*\}$. The full subcategory of $s\Set$ consisting of reduced simplicial sets will be
denoted $s\Set_0$.
For a pointed topological space $(X,*)$, its Eilenberg subcomplex of $\bar{S}_*(X)$ is defined by
\begin{displaymath}
\bar{S}_n(X):=\{f:\Delta^n\to X\mid f(v_i)=* \text{ for all vertices }v_i\in\Delta^n\}.
\end{displaymath}
If $X$ is connected, the natural inclusion $\bar{S}_*(X)\hookrightarrow S_*(X)$ is a weak equivalence of simplicial sets. Further, if we restrict functor $\bar{S}$ to the category $\Top_{0,*}$ of connected pointed spaces, we get the pair of adjoint functors
\begin{equation}\label{Eq_ReducedSS}
|-|:s\Set_0\rightleftarrows\Top_{0,*}:\bar{S}.
\end{equation}
~\par

Furthermore, there is another Quillen equivalence
\begin{equation}\label{Eq_KanLoopGroup}
\mathbb{G}:s\Set_0\rightleftarrows s\Gp:\bar{W},
\end{equation}
where $\mathbb{G}$ is called the Kan loop group functor and $\bar{W}$ is the classifying simplicial complex (see {\cite{GJ09}*{Proposition V.6.3.}}). It is worthwhile to point out that by the construction, the Kan loop group $\mathbb{G}X$ of any reduced simplicial set $X$ is semi-free, hence cofibrant.

In conclusion we have a sequence of equivalences of homotopy categories
\begin{equation}\label{Eq_Simplicial}
\Ho(\Top_{0,*})\simeq\Ho(s\Set_0)\simeq\Ho(s\Gp).
\end{equation}
This means giving a simplicial group is the same as giving a pointed, connected topological group.

The famous Dold-Kan correspondence states that there is a Quillen equivalence
\begin{displaymath}
N:s\Vect_k\rightleftarrows\Com(\Vect_k):\Gamma
\end{displaymath}
when $N$ is the normalisation functor and $\Gamma$ is its right adjoint (\cite{GJ09}*{Corollary III.2.3, Corollary III.2.7.}).
Then for any simplicial vector space $V\in s\Vect_k$, we define the homotopy group $\pi_*(V)$ to be the homology group of its normalisation $N(V)$, i.e.
\begin{equation*}
\pi_*(V):=H_*(N(V)).
\end{equation*}
~\par

This leads to the definition

\begin{defn}
Given a reduced simplicial set $X$ and a group scheme $G$ over $k$, the \textbf{representation homology} $HR_*(X,G)$ of $X$ with coefficient $G$ is the homotopy group
\begin{displaymath}
\pi_*((\mathbb{G}(X))_G).
\end{displaymath}
\end{defn}

Note that a reduced simplicial set can be equivalently described by a pointed, connected topological space via the adjoint pair given in \Cref{Eq_ReducedSS}.
In particular, we reiterate that we will not distinguish a simplicial set and its geometric realization.\par
~\par

Recall that the representation homology commutes with homotopy colimit (\cite{BRY22}*{Theorem 3.2.}), and based on this important fact we have the following

\begin{eg}[See also {\cite{BRY22}*{Section 6.1.2.}}]\label{Eg_RepHomOfSurfaces}
Given a positive integer $g$, let $\Sigma_g$ be the closed orientable surface of genus $g$. Then one has a homotopy colimit diagram in $\Top_*$
\begin{displaymath}
\Sigma_g\simeq\hcolim\left[\{*\}\leftarrow S^1\xrightarrow{\alpha}\bigvee_{i=1}^{2g}S^1\right]
\end{displaymath}
where the map $\alpha$ is the attaching map of the $2$-cell onto $\bigvee_{i=1}^{2g}S^1$. In terms of the generators of fundamental groups, we could denote the map $\alpha^g$ by
\begin{align*}
\alpha^g:S^1_c&\to\bigvee_{i=1}^{g}(S^1_{a_i}\vee S^1_{b_i})\\
c&\mapsto[a_1,b_1][a_2,b_2]\cdots[a_g,b_g]
\end{align*}
where $c$ is a generator of $\pi_1(S^1)$ and $a_i,b_i$ are the generators of $\pi_1(S^1_{a_i})$ and $\pi_1(S^1_{b_i})$. Then by applying the Kan loop group construction (see \Cref{Eq_KanLoopGroup}), one get a simplicial group model for $T^2$:
\begin{displaymath}
\mathbb{G}(\Sigma_g)\simeq\hcolim\left[1\leftarrow\mathbb{F}_1\xrightarrow{\alpha^g}\mathbb{F}_{2g}\right].
\end{displaymath}
Since the representation homology commutes with homotopy colimit (\cite{BRY22}*{Theorem 3.2.}), by \Cref{Eg_RepVars} one applies the functor $\mathbb{L}(-)_G$ and has 
\begin{displaymath}
HR_*(\Sigma_g,G)\simeq\hcolim[k\xleftarrow{\epsilon}\mathscr{O}(G)\xrightarrow{\alpha_*^g}\mathscr{O}(G^{2g})]
\end{displaymath}
where the map $\epsilon:\mathscr{O}(G)\to k$ is induced by the unit map of the group $G$ and $\alpha_*^g:\mathscr{O}(G)\to\mathscr{O}(G^{2g})$ is
\begin{equation}\label{Eq_Twist}
\alpha_*^g(f)(x_1,y_1,\cdots,x_g,y_g):=f([x_1,y_1][x_2,y_2]\cdots[x_g,y_g]),
\end{equation}
or equivalently
\begin{align*}
\alpha^*_g:G^{2g}(R)&\to G(R)\\
(x_1,y_1,\cdots,x_g,y_g)&\mapsto[x_1,y_1][x_2,y_2]\cdots[x_g,y_g].
\end{align*}

In the category $s\Alg_k$, the colimit
\begin{displaymath}
\colim[k\xleftarrow{\epsilon}\mathscr{O}(G)\xrightarrow{\alpha_*^g}\mathscr{O}(G^{2g})]
\end{displaymath}
is
\begin{equation}\label{Eq_GeneralCommutingCoordinateRing}
\mathscr{O}(G)^{\otimes2g}\otimes_{\mathscr{O}(G)}k
\end{equation}
where $\mathscr{O}(G)^{\otimes2g}$ and $k$ are viewed as $\mathscr{O}(G)$-modules via $\alpha^g_*$ and $\epsilon$. Now by viewing the homotopy colimit as the derived functor of colimit,
\begin{displaymath}
\hcolim[k\xleftarrow{\epsilon}\mathscr{O}(G)\xrightarrow{\alpha_*^g}\mathscr{O}(G^{2g})]\simeq\mathscr{O}(G)^{\otimes2g}\otimes_{\mathscr{O}(G)}^{\mathbb{L}}k,
\end{displaymath}
where $-\otimes_{\mathscr{O}(G)}^{\mathbb{L}}-$ is the left derived functor of $-\otimes_{\mathscr{O}(G)}-$.
\end{eg}

Summrising \Cref{Eg_RepHomOfSurfaces}, we have

\begin{lem}\label{Lemma_DerivingCommutingVarOfHigherGenus}
Let $\Sigma_g$ be the Riemann surface of genus $g$.
\begin{displaymath}
HR_*(\Sigma_g,G)\cong H_*(\mathscr{O}(G)^{\otimes2g}\otimes_{\mathscr{O}(G)}^{\mathbb{L}}k)
\end{displaymath}
where $-\otimes_{\mathscr{O}(G)}^{\mathbb{L}}-$ is the left derived functor of $-\otimes_{\mathscr{O}(G)}-$ in $s\Alg_k$.
\end{lem}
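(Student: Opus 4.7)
The plan is to follow the chain of identifications laid out in \Cref{Eg_RepHomOfSurfaces} and finish by identifying the resulting homotopy pushout in $s\Alg_k$ with a derived tensor product; in effect the lemma is a repackaging of the example, and the proof is just to verify that each step is an equivalence.

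First I would begin with the homotopy pushout decomposition
\begin{displaymath}
\Sigma_g \simeq \hcolim\left[\{*\}\leftarrow S^1\xrightarrow{\alpha}\bigvee_{i=1}^{2g}S^1\right],
\end{displaymath}
where $\alpha$ sends the generator to the product of commutators $[a_1,b_1]\cdots[a_g,b_g]$. Applying the Kan loop group functor $\mathbb{G}$, which is a left Quillen functor in the equivalence (\ref{Eq_KanLoopGroup}) and hence preserves homotopy colimits, yields a simplicial-group model
\begin{displaymath}
\mathbb{G}(\Sigma_g) \simeq \hcolim\bigl[1\leftarrow\mathbb{F}_1\xrightarrow{\alpha^g}\mathbb{F}_{2g}\bigr].
\end{displaymath}

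Next I would apply the derived representation functor $\mathbb{L}(-)_G$. By \cite[Theorem 3.2]{BRY22} this functor preserves homotopy colimits. By \Cref{Eg_RepVars}, $\mathbb{L}(1)_G\cong k$, and since free groups are cofibrant in $s\Gp$ and $(-)_G$ turns free products into coproducts in $\Alg_k$ (which are tensor products over $k$), one has $\mathbb{L}(\mathbb{F}_n)_G\cong\mathscr{O}(G)^{\otimes n}$. Combining these we get
\begin{displaymath}
HR_*(\Sigma_g,G)\cong\pi_*\,\hcolim\bigl[k\xleftarrow{\epsilon}\mathscr{O}(G)\xrightarrow{\alpha_*^g}\mathscr{O}(G)^{\otimes 2g}\bigr],
\end{displaymath}
which is precisely the content assembled in \Cref{Eg_RepHomOfSurfaces}.

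Finally I would identify the homotopy pushout in $s\Alg_k$ with the derived tensor product. Since pushouts in the category of commutative $k$-algebras are tensor products, and the standard model structure on $s\Alg_k$ computes the left derived functor of this pushout by cofibrantly replacing one of the legs and then taking the ordinary tensor product, the homotopy pushout along $\mathscr{O}(G)\to k$ and $\mathscr{O}(G)\to\mathscr{O}(G)^{\otimes 2g}$ is exactly $\mathscr{O}(G)^{\otimes 2g}\otimes^{\mathbb{L}}_{\mathscr{O}(G)}k$. Taking homotopy groups on simplicial commutative algebras agrees with taking homology via Dold--Kan, giving the stated isomorphism. The only real point of care, which I would expect to be the main (modest) obstacle, is checking that the structure maps produced by $\mathbb{L}(-)_G$ applied to $\alpha^g$ and to the basepoint map really agree with $\alpha_*^g$ of \Cref{Eq_Twist} and $\epsilon$, so that the derived tensor product is formed over the correct $\mathscr{O}(G)$-algebra structures.
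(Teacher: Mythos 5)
Your proposal is correct and follows essentially the same chain of identifications the paper uses: the paper presents this lemma explicitly as a summary of \Cref{Eg_RepHomOfSurfaces}, which consists of precisely the homotopy-pushout decomposition of $\Sigma_g$, the Kan loop group model, the fact that $\mathbb{L}(-)_G$ commutes with homotopy colimits, and the identification of the resulting homotopy pushout in $s\Alg_k$ with the derived tensor product. Your extra care in identifying $\mathbb{L}(\mathbb{F}_n)_G\cong\mathscr{O}(G)^{\otimes n}$ and in matching the induced map with $\alpha_*^g$ of \Cref{Eq_Twist} is consistent with (and fills in small details implicit in) the paper's treatment.
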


\begin{eg}\label{Eg_DerivingCommutingVarOfHigherGenus}
One could show that $HR_0(X,G)\cong(\pi_1(X))_G$ by comparing the universal mapping properties (see also \cite{BRY22}*{(3.6)}). In fact, for any $k$-algebra $R$,
\begin{align*}
\mathrm{Hom}_{\Alg_k}((\pi_1(X))_G,R)&\cong\mathrm{Hom}_{\Gp}(\pi_1(X),G(R))\\
&\cong\mathrm{Hom}_{s\Gp}(\mathbb{G}(X),G(R))\\
&\cong\mathrm{Hom}_{s\Alg_k}((\mathbb{G}(X))_G,R)\\
&\cong\mathrm{Hom}_{\Alg_k}(H_0((\mathbb{G}(X))_G),R)
\end{align*}
where the second and the fourth equality are because $G(R)$ and $R$ are viewed as simplicial objects concentrated on $0$-th degree, i.e. all higher elements are degenerate.
In particular one has
\begin{displaymath}
HR_0(\Sigma_g,G)\cong\mathscr{O}(C_g(G))\cong\mathscr{O}(G)^{\otimes2g}\otimes_{\mathscr{O}(G)}k.
\end{displaymath}
This means the representation homology of Riemann surface of genus $g$ is a derived version of the commuting scheme of higher genus $g$.\par
To see from the definition that $\mathscr{O}(G)^{\otimes2g}\otimes_{\mathscr{O}(G)}k$ is the coordinate ring, by the universal property of colimit,
\begin{align*}
\mathrm{Hom}_{\Alg_k}(\mathscr{O}(G)^{\otimes2g}\otimes_{\mathscr{O}(G)}k,R)&\cong\{(f,h)\in\mathrm{Hom}_{\Alg_k}(\mathscr{O}(G)^{\otimes2g},R)\times\mathrm{Hom}_{\Alg_k}(k,R)\mid f\alpha^g_*=h\epsilon\}\\
&=\{(f_1,\cdots,f_{2g},h)\in G^{2g}(R)\times e(R)\mid (f_1,\cdots,f_{2g})\alpha^g_*=h\epsilon\}.
\end{align*}
By \Cref{Eq_Twist}, $(f_1,\cdots,f_{2g})\alpha^g_*=[f_1,f_2]\cdots[f_{2g-1},f_{2g}]$, and $e(R)$ consists of only the identity element, so
\begin{align*}
&\{(f_1,\cdots,f_{2g},h)\in G^{2g}(R)\times e(R)\mid (f_1,\cdots,f_{2g})\alpha^g_*=h\epsilon\}\\
=&\{(f_1,\cdots,f_{2g})\in G^{2g}(R)\mid[f_1,f_2]\cdots[f_{2g-1},f_{2g}]=1\}.
\end{align*}
\end{eg}

\begin{rmk}\label{Rmk_CDGA}
When $\mathrm{char}~k=0$, there is a model structure on $\CDGA_k$ (\cite{GS07}*{Theorem 3.6, Example 3.7.}, or explicitly \cite{GM03}*{Theorem V.3.4.}). Now there is a Quillen equivalence (\cite{Quillen69}*{Page 220-223}, or \cite{BRY22-2}*{Appendix})
\begin{displaymath}
N^*:\CDGA_k\leftrightarrows s\Alg_k:N,
\end{displaymath}
hence
\begin{align*}
HR_*(\Sigma_g,G)&:= H_*\bigg(N(\hcolim[k\xleftarrow{\epsilon}\mathscr{O}(G)\xrightarrow{\alpha_*^g}\mathscr{O}(G^{2g})])\bigg)\\
&\cong H_*\bigg(\hcolim[N(k)\xleftarrow{\epsilon}N(\mathscr{O}(G))\xrightarrow{\alpha_*^g}N(\mathscr{O}(G^{2g}))]\bigg)\\
&\cong H_*\bigg(\hcolim[k\xleftarrow{\epsilon}\mathscr{O}(G)\xrightarrow{\alpha_*^g}\mathscr{O}(G^{2g})]\bigg)
\end{align*}
where the last homotopy colimit is taken in the category $\CDGA_k$.
This means (when $\mathrm{char}~k=0$ as the assumption at the beginning) the homotopy colimit $\mathscr{O}(G)^{\otimes2g}\otimes_{\mathscr{O}(G)}^{\mathbb{L}}k$ in $s\Alg_k$ could be computed in $\CDGA_k$.
\end{rmk}

\section{Main Results}\label{Sec_Main}

The main tools we need are the following propositions:

\begin{prop}\label{Proposition_UnipotnetGp}
Let $U_n$ be the affine group scheme consisting of unipotent upper triangular matrices, and let $\Sigma_g$ be the closed orientable surface of genus $g$. Then the following are equivalent:
\begin{enumerate}
\item The representation homology $HR_i(\Sigma_g,U_n)$ vanishes in dimension greater or equal than $n$, namely
\begin{displaymath}
HR_i(\Sigma_g,U_n)=0\;\;\;\forall i\geq n.
\end{displaymath}
\item There is an isomorphism of graded algebras
\begin{equation}\label{Eq_MultiStructU}
HR_*(\Sigma_g,U_n)\cong HR_0(\Sigma_g,U_n)\otimes\mathrm{Sym}_k(T_1,\cdots,T_{n-1})
\end{equation}
where $\mathrm{Sym}_k$ is the graded symmetric algebra over $k$ and $T_i$ is of homological degree $1$.
\item The commuting scheme of higher genus $C_g(U_n)$ is a complete intersection of codimension $\frac{(n-2)(n-1)}{2}$ in $U_n\times\overset{2g}{\cdots}\times U_n$.
\end{enumerate}
\end{prop}

And similarly

\begin{prop}\label{Proposition_BorelGp}
Let $B_n$ be the affine group scheme consisting of invertible upper triangular matrices, and let $\Sigma_g$ be the closed orientable surface of genus $g$. Then the following are equivalent:
\begin{enumerate}
\item The representation homology $HR_i(\Sigma_g,B_n)$ vanishes in dimension greater than $n$, namely
\begin{displaymath}
HR_i(\Sigma_g,B_n)=0\;\;\;\forall i>n.
\end{displaymath}
\item There is an isomorphism of graded algebras
\begin{equation}\label{Eq_MultiStructB}
HR_*(\Sigma_g,B_n)\cong HR_0(\Sigma_g,B_n)\otimes_k\mathrm{Sym}_k(T_1,\cdots,T_{n})
\end{equation}
where $\mathrm{Sym}_k$ is the graded symmetric product over $k$ and $T_i$ is of homological degree $1$.
\item The commuting scheme of higher genus $C_g(B_n)$ is a complete intersection of codimension $\frac{(n-1)n}{2}$ in $B_n\times\overset{2g}{\cdots}\times B_n$.
\end{enumerate}
\end{prop}

In the propositions, we have not only the vector space structure of $HR_*(\Sigma_g,U_n)$ (resp. $HR_*(\Sigma_g,B_n)$), but also the algebra structure on it when the vanishing condition holds.

\begin{rmk}
It seems from \Cref{Eq_MultiStructU} and \Cref{Eq_MultiStructB}, one can expect that the higher representation homologies are determined by the $0$-th homology.
However, this is generally not true.
One reason is that the representation homology depends on the homotopy type of the space, while the classical representation scheme $\mathrm{Rep}_G(\pi_1(X))$ depends only on the fundamental group.
Also, it is a coincidence that we have \Cref{Eq_MultiStructU}, mainly because we take coefficients in unipotent groups.
If we take classical reductive group, things will be very different, for instance see \cite{BRY19}*{Conjecture 1.3}.
We will also provide a counterexample in \Cref{Proposition_U6NonCI}.
\end{rmk}

Now we can state our main result:

\begin{thm}\label{Thm_List}
$C(U_2),C(U_3),C(U_4),C(U_5),C(B_2),C(B_3)$ are complete intersections.
$C(U_n)$ is not a complete intersection when $n\geq6$.
\end{thm}

\begin{proof}
We use (version 1.0 of) the \verb|Macaulay2| package \verb|RepHomology| (\cite{li2024package}) and verify that the condition
\begin{displaymath}
HR_i(\Sigma_g,U_n)=0\;\;\;\forall i\geq n
\end{displaymath}
holds for $n\leq5$.
Therefore by \Cref{Proposition_UnipotnetGp}, $C(U_2),C(U_3),C(U_4),C(U_5)$ are complete intersections.

Also, the condition
\begin{displaymath}
HR_i(\Sigma_g,B_n)=0\;\;\;\forall i>n
\end{displaymath}
holds for $n\leq3$ by the package, so $C(B_2),C(B_3)$ are complete intersections by \Cref{Proposition_BorelGp}.

The code for this package can be found at
\href{https://github.com/GuanyuLee/RepresentationHomology}{https://github.com/GuanyuLee/RepresentationHomology}
and is expected to appear in a future release of \verb|Macaulay2|.

$C(U_n)$ is not a complete intersection when $n\geq6$ directly by \Cref{Proposition_U6NonCI} below.
\end{proof}

\subsection{Proofs of the main results}

We will break the proof into several lemmas. By \Cref{Rmk_CDGA}, we can proceed our computation in the category $\CDGA_k$.

\begin{lem}\label{Eg_RegularKernelInCDGA}
Suppose that the map $\epsilon:\mathscr{O}(G)\to k$, which is induced by taking the identity in the group $G$, has a kernel generated by a regular sequence $\{y_1,\cdots,y_r\}$.
Then
\begin{displaymath}
HR_*(\Sigma_g,G)\cong H_*(\mathrm{Kos}_{\mathscr{O}(G^{2g})}(\alpha_*(y_1),\cdots,\alpha_*(y_r)))
\end{displaymath}
where $\alpha_*$ is defined in \Cref{Eq_Twist}.
\end{lem}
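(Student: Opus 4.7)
The plan is to exploit the homotopy-pushout presentation of representation homology from \Cref{Eg_RepHomOfSurfaces}, which by \Cref{Rmk_CDGA} may be evaluated inside $\CDGA_k$:
\begin{displaymath}
HR_*(\Sigma_g,G)\cong H_*\bigl(\hcolim[k\xleftarrow{\epsilon}\mathscr{O}(G)\xrightarrow{\alpha_*^g}\mathscr{O}(G^{2g})]\bigr).
\end{displaymath}
To compute this homotopy pushout I will factor the augmentation $\epsilon$ as a cofibration followed by a weak equivalence, and then form the ordinary pushout along the cofibrant replacement; this will identify the result with a Koszul complex.

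The natural candidate for the cofibrant replacement is $K:=\mathrm{Kos}_{\mathscr{O}(G)}(y_1,\cdots,y_r)$. On the one hand, because $\{y_1,\cdots,y_r\}$ is a regular sequence in $\mathscr{O}(G)$, \Cref{Proposition_HomologyOfRegularSeq} applied with $M=\mathscr{O}(G)$ gives $H_j(K)=0$ for $j>0$ and $H_0(K)=\mathscr{O}(G)/(y_1,\cdots,y_r)\cong k$, so the canonical map $K\xrightarrow{\sim}k$ is a quasi-isomorphism. On the other hand, $K=\mathscr{O}(G)\otimes\Lambda_k[t_1,\cdots,t_r]$ is freely generated over $\mathscr{O}(G)$ as a graded commutative algebra by the degree-$1$ generators $t_1,\cdots,t_r$, so the unit $\mathscr{O}(G)\hookrightarrow K$ is a semi-free, and in particular a cofibration of CDGAs under $\mathscr{O}(G)$.

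With this replacement in hand, the homotopy pushout is computed by the ordinary pushout
\begin{displaymath}
\mathscr{O}(G^{2g})\otimes_{\mathscr{O}(G)}K,
\end{displaymath}
where $\mathscr{O}(G^{2g})$ is viewed as an $\mathscr{O}(G)$-algebra via $\alpha_*^g$. As a graded algebra this equals $\mathscr{O}(G^{2g})\otimes\Lambda_k[t_1,\cdots,t_r]$, and base-changing the differential $d(t_i)=y_i$ along $\alpha_*^g$ yields $d(t_i)=\alpha_*^g(y_i)$. By definition this is exactly $\mathrm{Kos}_{\mathscr{O}(G^{2g})}(\alpha_*^g(y_1),\cdots,\alpha_*^g(y_r))$, and taking homology delivers the claimed isomorphism.

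The only subtlety I anticipate is justifying that the Koszul complex really models the derived pushout in $\CDGA_k$, rather than merely computing the underlying $\mathrm{Tor}$ in chain complexes of $\mathscr{O}(G)$-modules. This is handled by appealing to the model structure on $\CDGA_k$ of \Cref{Rmk_CDGA}: semi-free commutative algebra extensions are cofibrant in $\CDGA_k$ under $\mathscr{O}(G)$, and cofibrations are preserved by cobase change, so no further correction terms appear beyond the naive tensor product.
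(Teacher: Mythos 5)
Your proposal is correct and follows essentially the same route as the paper: both identify $\mathrm{Kos}_{\mathscr{O}(G)}(y_1,\dots,y_r)\to k$ as a cofibrant replacement (using \Cref{Proposition_HomologyOfRegularSeq} for acyclicity), base change along $\alpha_*^g$ to obtain the Koszul complex over $\mathscr{O}(G^{2g})$, and invoke the derived-tensor description of $HR_*(\Sigma_g,G)$. Your write-up merely spells out the cofibrancy justification (semi-freeness) and the model-categorical subtlety more explicitly than the paper does.
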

\begin{proof}
Since $\ker\epsilon$ is generated by a regular sequence $\{y_1,\cdots,y_r\}$, the canonical map
\begin{align*}
\mathrm{Kos}^{\mathscr{O}(G)}_*(y_1,\cdots,y_r)\to k
\end{align*}
gives a cofibrant replacement of $k$ over $\mathscr{O}(G)$ in $\CDGA_k$ by \cite{Eisenbud95}*{Corollary 17.5}. It is easy to see by the definition of Koszul complex that
\begin{displaymath}
\mathscr{O}(G)^{\otimes2g}\otimes_{\mathscr{O}(G)}\mathrm{Kos}_{\mathscr{O}(G)}(y_1,\cdots,y_r)\cong\mathrm{Kos}^{\mathscr{O}(G^{2g})}_*(\alpha_*(y_1),\cdots,\alpha_*(y_r))
\end{displaymath}
where $\alpha_*$ is described as in \Cref{Eq_Twist}.
Thus the representation homology $HR_*(\Sigma_g,G)$ can be computed by $H_*(\mathrm{Kos}_{\mathscr{O}(G^{2g})}(\alpha_*(y_1),\cdots,\alpha_*(y_r)))$ by \Cref{Lemma_DerivingCommutingVarOfHigherGenus}.
\end{proof}

In our cases, the complexes can be written explicitly.

\begin{lem}\label{Lemma_SpecificComputations}
The representation homology $HR_*(\Sigma_g,U_n)$ for $G=U_n$ can be computed by
\begin{displaymath}
HR_*(\Sigma_g,U_n)\cong H_*(\mathrm{Kos}_*^{\mathscr{O}(U_n^{2g})}(\{f_{i,j}\}_{1\leq i<j\leq n}))
\end{displaymath}
where $f_{i,j}$ is the $(i,j)$-entry of the matrix
\begin{equation}\label{Eq_UnKoszulMatrix}
f:=[X_1,Y_1]\cdots[X_g,Y_g]-I,
\end{equation}
and $X_t,Y_t$ are the coordinate matrices of the $2t-1,2t$-th copy of $U_n$ in $U_n^{2g}$, respectively (so the coordinate ring of $U_n^{2g}$ is $k[x_{t,i,j},y_{t,i,j}]_{1\leq i<j\leq n,1\leq t\leq g}$).
\end{lem}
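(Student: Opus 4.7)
The plan is to reduce the lemma directly to \Cref{Eg_RegularKernelInCDGA} once its hypothesis has been verified for $G = U_n$. By \Cref{Lemma_RegSeqGenerators}(1), the kernel of the counit $\epsilon_{U_n}: \mathscr{O}(U_n) \to k$ is generated by $\{x_{i,j}\}_{1 \leq i < j \leq n}$, and these form a regular sequence because $\mathscr{O}(U_n)$ is the polynomial ring $k[x_{i,j}]_{1 \leq i < j \leq n}$ and the $x_{i,j}$ constitute its full set of polynomial generators. Feeding this into \Cref{Eg_RegularKernelInCDGA} would yield
\begin{displaymath}
HR_*(\Sigma_g, U_n) \cong H_*\bigl(\mathrm{Kos}_*^{\mathscr{O}(U_n^{2g})}(\{\alpha_*(x_{i,j})\}_{1 \leq i < j \leq n})\bigr).
\end{displaymath}

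The remaining task is to identify each $\alpha_*(x_{i,j})$ with the $(i,j)$-entry $f_{i,j}$ of the matrix $[X_1,Y_1]\cdots[X_g,Y_g]$. By the defining formula \Cref{Eq_Twist}, the element $\alpha_*(x_{i,j}) \in \mathscr{O}(U_n^{2g})$, evaluated on a tuple $(A_1,B_1,\ldots,A_g,B_g) \in U_n^{2g}(R)$ for an arbitrary $k$-algebra $R$, returns $x_{i,j}\bigl([A_1,B_1]\cdots[A_g,B_g]\bigr)$. For $i < j$, the coordinate function $x_{i,j}$ on $U_n$ simply reads off the $(i,j)$-entry of a unipotent matrix, so this evaluation equals the $(i,j)$-entry of the commutator product; by Yoneda this identifies $\alpha_*(x_{i,j})$ with the polynomial $f_{i,j}$ in the coordinate ring $k[x_{t,i,j}, y_{t,i,j}]_{1\leq i<j\leq n,\,1\leq t\leq g}$ of $U_n^{2g}$.

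No serious obstacle is expected: the substantive content has been absorbed into \Cref{Eg_RegularKernelInCDGA} and \Cref{Lemma_RegSeqGenerators}, and the only remaining work is the bookkeeping translating the abstract twist $\alpha_*$ into the concrete matrix-entry formula, which is immediate from the functor-of-points description of $U_n$. If anything, the most delicate point is ensuring that the identification of $\alpha_*(x_{i,j})$ with $f_{i,j}$ is being made in $\mathscr{O}(U_n^{2g})$ rather than just after specialisation to $R$-points, but this is handled by the standard Yoneda argument applied to the affine scheme $U_n^{2g}$.
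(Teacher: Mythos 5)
Your proposal is correct and follows essentially the same route as the paper: invoke \Cref{Lemma_RegSeqGenerators} to see that $\{x_{i,j}\}$ is a regular sequence generating $\ker\epsilon$, feed this into \Cref{Eg_RegularKernelInCDGA}, and then unwind \Cref{Eq_Twist} via the functor-of-points description to identify $\alpha_*^g(x_{i,j})$ with the $(i,j)$-entry of $[X_1,Y_1]\cdots[X_g,Y_g]$. The paper carries out the last step by a direct matrix computation; your explicit appeal to Yoneda is the same argument phrased slightly more formally.
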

\begin{proof}
By \Cref{Lemma_GroupProperties}, $\{x_{i,j}\}_{1\leq i<j\leq n}$ forms a regular sequence in $\mathscr{O}(U_n)$ which generates the kernel of $\epsilon:\mathscr{O}(U_n)\to k$. Thus by \Cref{Eg_RegularKernelInCDGA},
\begin{displaymath}
HR_*(\Sigma_g,U_n)\cong H_*(\mathrm{Kos}_*^{\mathscr{O}(U_n^{2g})}(\{\alpha_*^g(x_{i,j})\}_{1\leq i<j\leq n})).
\end{displaymath}
We then need to specify the elements $\alpha_*^g(x_{i,j})$ in $\mathrm{Kos}_*^{\mathscr{O}(U_n^{2g})}$.

Suppose that the $2g$-tuple of matrices $(X_1,Y_1,X_2,Y_2,\cdots,X_g,Y_g)$ gives the coordinate matrices of the copies of $U_n$ in $U_n^{2g}=(U_n^2)^g$, each of which has the same shape as \Cref{Eq_UnMatrixGenerator}. With these settings, the coordinate ring
\begin{displaymath}
\mathscr{O}(U_n^{2g})=k[X_1,Y_1,X_2,Y_2,\cdots,X_g,Y_g]=k[x_{t,i,j},y_{t,i,j}]_{1\leq i<j\leq n,1\leq t\leq g}.
\end{displaymath}

Given arbitrary points $(x_1,y_1,x_2,y_2,\cdots,x_g,y_g)\in U_n^{2g}(R)=(U_n^2)^g(R)$, view $X_i,Y_i$ (resp. $X$) matrices of functions on $U^{2g}(R)$ (resp. $U(R)$), then
\begin{align*}
\alpha_*^g(X)(x_1,y_1,\cdots,x_g,y_g)&=X([x_1,y_1][x_2,y_2]\cdots[x_g,y_g])\\
=&[X_1(x_1),Y_1(y_1)]\cdots[X_g(x_g),Y_g(y_g)]\\
=&\bigg([X_1,Y_1]\cdots[X_g,Y_g]\bigg)\bigg([x_1,y_1][x_2,y_2]\cdots[x_g,y_g]\bigg)
\end{align*}
where the first equation is by definition \Cref{Eq_Twist} and the third equation is because all $X_i$'s and $Y_i$'s are matrix generators.

This implies that $\alpha_*^g(x_{i,j})$ is the $(i,j)$-entry of the matrix
\begin{displaymath}
f:=[X_1,Y_1]\cdots[X_g,Y_g]-I.
\end{displaymath}
Thus
\begin{displaymath}
HR_*(\Sigma_g,U_n)\cong H_*(\mathrm{Kos}_{\mathscr{O}(U_n^{2g})}(\{f_{i,j}\}_{1\leq i<j\leq n}))
\end{displaymath}
where $f_{i,j}$ is the $(i,j)$-entry in the matrix $[X_1,Y_1]\cdots[X_g,Y_g]-I$ defined in \Cref{Eq_UnKoszulMatrix}.
\end{proof}

Similarly for $B_n$, it has a matrix generator \Cref{Eq_BnMatrixGenerator} and also by \Cref{Lemma_GroupProperties}, $\{x_{i,j}-\delta_j^i\}_{1\leq i\leq j\leq n}$ forms a regular sequence in $\mathscr{O}(B_n)$ which generates the kernel of $\epsilon:\mathscr{O}(B_n)\to k$. Therefore we can apply \Cref{Eg_RegularKernelInCDGA} in the same manner, yielding

\begin{lem}
The representation homology $HR_*(\Sigma_g,B_n)$ for $G=B_n$ can be computed by
\begin{displaymath}
HR_*(\Sigma_g,B_n)\cong H_*(\mathrm{Kos}_*^{\mathscr{O}(B_n^{2g})}(\{f_{i,j}\}_{1\leq i\leq j\leq n}))
\end{displaymath}
where $f_{i,j}$ is the $(i,j)$-entry of the matrix
\begin{equation}\label{Eq_BnKoszulMatrix}
f:=[X_1,Y_1]\cdots[X_g,Y_g]-I,
\end{equation}
and $X_t,Y_t$ are the coordinate matrices of the $2t-1,2t$-th copy of $B_n$ in $B_n^{2g}$, respectively.
\end{lem}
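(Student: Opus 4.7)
The plan is to mimic the proof of \Cref{Lemma_SpecificComputations} verbatim, swapping the regular sequence and the matrix generator for the ones appropriate to $B_n$. First I would invoke \Cref{Lemma_RegSeqGenerators}(2), which identifies $\ker\epsilon_{B_n}$ with the ideal generated by the regular sequence $\{x_{i,j}-\delta_j^i\}_{1\leq i\leq j\leq n}$ inside $\mathscr{O}(B_n)$. Feeding this regular sequence into \Cref{Eg_RegularKernelInCDGA} immediately yields
\begin{displaymath}
HR_*(\Sigma_g,B_n)\cong H_*\bigl(\mathrm{Kos}_*^{\mathscr{O}(B_n^{2g})}(\{\alpha_*^g(x_{i,j}-\delta_j^i)\}_{1\leq i\leq j\leq n})\bigr),
\end{displaymath}
so the only remaining task is to recognise the twisted generators $\alpha_*^g(x_{i,j}-\delta_j^i)$ as the $(i,j)$-entries of the matrix $[X_1,Y_1]\cdots[X_g,Y_g]-I$.

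For that identification I would repeat, almost word-for-word, the computation carried out in the proof of \Cref{Lemma_SpecificComputations}. Writing $(X_1,Y_1,\dots,X_g,Y_g)$ for the coordinate matrices of the $2g$ copies of $B_n$ in $B_n^{2g}$ (each of the shape \Cref{Eq_BnMatrixGenerator}), the defining formula \Cref{Eq_Twist} for $\alpha_*^g$ on a matrix generator $X$ gives
\begin{displaymath}
\alpha_*^g(X)=[X_1,Y_1][X_2,Y_2]\cdots[X_g,Y_g]
\end{displaymath}
entry-by-entry, because $B_n$ has a matrix generator and the commutators, products and inverses translate directly into matrix operations over $\mathscr{O}(B_n^{2g})$. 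Since $\alpha_*^g$ is a ring map and $\delta_j^i\in k$, linearity gives $\alpha_*^g(x_{i,j}-\delta_j^i)=\alpha_*^g(x_{i,j})-\delta_j^i$, which is precisely the $(i,j)$-entry of the matrix $[X_1,Y_1]\cdots[X_g,Y_g]-I$ in \Cref{Eq_BnKoszulMatrix}.

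I do not anticipate a real obstacle here: everything in the $U_n$ argument goes through for $B_n$, the only nuance being the shift by the identity matrix to account for the fact that the diagonal entries of $X$ are no longer forced to be $1$ but are instead invertible elements, which is exactly why the regular sequence produced by \Cref{Lemma_RegSeqGenerators}(2) is shifted by the Kronecker delta. Once the identification $\alpha_*^g(x_{i,j}-\delta_j^i)=f_{i,j}$ is in place, the Koszul complex appearing in \Cref{Eg_RegularKernelInCDGA} is literally $\mathrm{Kos}_*^{\mathscr{O}(B_n^{2g})}(\{f_{i,j}\}_{1\leq i\leq j\leq n})$, which completes the proof.
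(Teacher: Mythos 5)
Your proposal is correct and follows essentially the same route the paper takes: the paper simply remarks that by \Cref{Lemma_RegSeqGenerators} the kernel of $\epsilon_{B_n}$ is generated by the regular sequence $\{x_{i,j}-\delta_j^i\}$ and then applies \Cref{Eg_RegularKernelInCDGA} ``in the same manner'' as in \Cref{Lemma_SpecificComputations}. Your spelling out of the step $\alpha_*^g(x_{i,j}-\delta_j^i)=\alpha_*^g(x_{i,j})-\delta_j^i$, which produces the shift by $I$ in \Cref{Eq_BnKoszulMatrix}, is exactly the detail the paper leaves implicit.
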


We are ready to prove the main propositions.

\begin{proof}[Proof of \Cref{Proposition_UnipotnetGp}]
Let $X_t,Y_t$ be the coordinate matrices of the $2t-1,2t$-th copy of $U_n$ in $U_n^{2g}$ respectively, and let
\begin{displaymath}
f=[X_1,Y_1]\cdots[X_g,Y_g]-I
\end{displaymath}
as in \Cref{Eq_UnKoszulMatrix}. We first notice that by \Cref{Lemma_GroupProperties}, the upper part of $f$, $\{f_{i,j}\}_{1\leq i<j\leq n}$, consists of homogeneous elements in $\mathscr{O}(U_n^{2g})=k[X_1,Y_1,X_2,Y_2,\cdots,X_g,Y_g]=k[x_{t,i,j},y_{t,i,j}]_{1\leq i<j\leq n,1\leq t\leq g}$, and by \Cref{Lemma_SpecificComputations},
\begin{displaymath}
HR_*(\Sigma_g,U_n)\cong H_*(\mathrm{Kos}_*^{\mathscr{O}(U_n^{2g})}(\{f_{i,j}\}_{1\leq i<j\leq n})).
\end{displaymath}

One notice that the sub-diagonal elements $f_{i,i+1}$ are $0$ for all $1\leq i<n$, since the sub-diagonal elements of $[X_t,Y_t]$ are $0$. Therefore, among all the elements $\{f_{i,j}\}_{1\leq i<j\leq n}$, there are exactly $n-1$ terms which are $0$, so there is an isomorphism of complexes by \Cref{Lemma_KoszulZeroSubseq}
\begin{equation}\label{Eq_UnSimpleStruct}
H_*(\mathrm{Kos}_*(\{f_{i,j}\}_{1\leq i<j\leq n}))\cong H_*(\mathrm{Kos}_*(\{f_{i,j}\}_{1\leq i<j\leq n,\,i+1<j}))\otimes_k\mathrm{Sym}_k(T_1,\cdots,T_{n-1})
\end{equation}
where each $T_j$ is of degree $1$.

By \cite{Eisenbud95}*{Corollary 17.5} and \Cref{Eq_UnSimpleStruct},
\begin{equation}\label{Eq_ExplicitCoordinateRing}
H_0(\mathrm{Kos}_*(\{f_{i,j}\}_{1\leq i<j\leq n}))=\frac{\mathscr{O}(U_n^{2g})}{(\{f_{i,j}\}_{1\leq i<j\leq n})}=\frac{\mathscr{O}(U_n^{2g})}{(\{f_{i,j}\}_{1\leq i<j\leq n,\,i+1<j})}.
\end{equation}
And combining \Cref{Eg_DerivingCommutingVarOfHigherGenus} and \Cref{Eq_ExplicitCoordinateRing},
\begin{equation*}
\mathscr{O}(C_g(U_n))=H_0(\mathrm{Kos}_*(\{f_{i,j}\}_{1\leq i<j\leq n}))=\frac{\mathscr{O}(U_n^{2g})}{(\{f_{i,j}\}_{1\leq i<j\leq n,\,i+1<j})}.
\end{equation*}

\begin{enumerate}

\item[(1)$\Rightarrow$(2)]
If the vanishing results hold, then
\begin{displaymath}
H_*(\mathrm{Kos}_*(\{f_{i,j}\}_{1\leq i<j\leq n,\,i+1<j}))\otimes\mathrm{Sym}_k(T_1,\cdots,T_{n-1})
\end{displaymath}
has at most $n-1$ nontrivial levels. However the part $\mathrm{Sym}_k(T_1,\cdots,T_{n-1})$ already provides $n-1$ nontrivial levels, hence $H_*(\mathrm{Kos}_*(\{f_{i,j}\}_{1\leq i<j\leq n,\,i+1<j}))$ is concentrated at degree $0$.

\item[(2)$\Rightarrow$(3)]
By the assumption and \Cref{Eq_UnSimpleStruct},
\begin{displaymath}
H_*(\mathrm{Kos}_*(\{f_{i,j}\}_{1\leq i<j\leq n,\,i+1<j}))\cong H_0(\mathrm{Kos}_*(\{f_{i,j}\}_{1\leq i<j\leq n,\,i+1<j})).
\end{displaymath}
Hence by \Cref{Thm_CA}, $\{f_{i,j}\}_{1\leq i<j\leq n,\,i+1<j}$ is a regular sequence. Then by \Cref{Cor_CI}, $C_g(U_n)$ is a complete intersection.

\item[(3)$\Rightarrow$(1)]
Bear in mind that \Cref{Eq_ExplicitCoordinateRing} gives the coordinate ring, by \Cref{Cor_CI} and \Cref{Thm_CA},
\begin{displaymath}
H_*(\mathrm{Kos}_*(\{f_{i,j}\}_{1\leq i<j\leq n,\,i+1<j}))\cong H_0(\mathrm{Kos}_*(\{f_{i,j}\}_{1\leq i<j\leq n,\,i+1<j})).
\end{displaymath}
From \Cref{Eq_UnSimpleStruct}, the vanishing result is proven.

\end{enumerate}
\end{proof}

\begin{rmk}
The key step of the proof is the observation that the subdiagonal elements $\{f_{i,i+1}\}_{1\leq i<n}$ are $0$ in \Cref{Eq_UnKoszulMatrix}.

For $B_n$, diagonal elements $\{f_{i,i}\}_{1\leq i\leq n}$ in \Cref{Eq_BnKoszulMatrix} are also $0$. Hence there is a similar formula
\begin{displaymath}
H_*(\mathrm{Kos}_*(\{f_{i,j}\}_{1\leq i\leq j\leq n}))\cong H_*(\mathrm{Kos}_*(\{f_{i,j}\}_{1\leq i<j\leq n}))\otimes_k\mathrm{Sym}_k(T_1,\cdots,T_{n})
\end{displaymath}
and all other tools could be applied in the same way by \Cref{Lemma_GroupProperties}. The proof of \Cref{Proposition_BorelGp} will be almost exactly the same.
\end{rmk}

\subsection{A counterexample}

\begin{prop}\label{Proposition_U6NonCI}
$C(U_6)$ is not a complete intersection of codimension $10$ in $U_6\times U_6$.
\end{prop}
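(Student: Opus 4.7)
The plan is to invoke our main theorem and reduce \Cref{Proposition_U6NonCI} to an explicit Macaulay2 computation. By \Cref{Thm_UnipotnetGp} applied with $g=1$, $C(U_6)$ is a complete intersection if and only if the ten polynomials $\{f_{i,j}\}_{1\leq i+1<j\leq 6}$ --- the entries of $f:=[X,Y]=XYX^{-1}Y^{-1}-I$ lying strictly above the superdiagonal, where $X,Y$ are the two generic coordinate matrices of $U_6$ --- form a regular sequence in the polynomial ring $R:=\mathscr{O}(U_6^{2})=k[x_{i,j},y_{i,j}]_{1\leq i<j\leq 6}$ on $30$ variables. By \Cref{Cor_CI} this is in turn equivalent to the single numerical condition
\[
\dim R/(f_{i,j})_{i+1<j}\;=\;30-10\;=\;20,
\]
so it suffices to check in Macaulay2 that the actual quotient dimension is strictly larger than $20$.

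The concrete steps are: first, declare $R$ with the internal grading of \Cref{Lemma_InternalGrading} so that each $f_{i,j}$ is homogeneous of weight $j-i$, which keeps the Gröbner computation tractable; next, build the two generic unipotent matrices $X$ and $Y$, whose inverses are polynomial in their entries because $X-I$ and $Y-I$ are nilpotent; form the matrix $[X,Y]$ and extract the ten entries $f_{i,j}$ with $j>i+1$, generating the ideal $I\subset R$; finally, ask Macaulay2 either for $\dim R/I$ directly, or, more informatively, for a single nontrivial class in $H_1(\mathrm{Kos}_*(f_{i,j}))$, i.e.\ a syzygy $\sum g_{i,j} f_{i,j}=0$ not coming from the trivial Koszul relations. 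A positive output ($\dim R/I\geq 21$, or equivalently a nonzero $H_1$-class) together with \Cref{Thm_CA} prevents $\{f_{i,j}\}$ from being a regular sequence, and by \Cref{Cor_CI} prevents $C(U_6)$ from being a complete intersection.

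The main obstacle is simply the size of the computation: a Gröbner basis on $30$ variables with ten quadratic generators sits near the edge of what a standard machine will handle directly. The internal bigrading from \Cref{Lemma_InternalGrading} is the essential tool here, since it lets one work one graded piece at a time, and for the certificate of non-regularity it suffices to exhibit a nontrivial class in $H_1$ living in a single internal degree. If speed forces running Macaulay2 over a large prime $\mathbb{F}_p$ first, one must then certify that the obstruction lifts to characteristic $0$ --- e.g.\ by re-running in characteristic $0$ restricted to the single bigraded component in which the syzygy appears, or by writing down the candidate syzygy produced by the $\mathbb{F}_p$ computation and verifying it by hand over $k$.
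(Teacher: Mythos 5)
Your proposal takes a genuinely different route from the paper, and as written it is not yet a proof: it is a plan for a Macaulay2 computation that you have not actually carried out. You correctly reduce (via \Cref{Thm_UnipotnetGp}, \Cref{Cor_CI}, and \Cref{Thm_CA}) the question to whether the ten entries $\{f_{i,j}\}_{1\le i+1<j\le 6}$ form a regular sequence in $\mathscr{O}(U_6^2)$, and you correctly observe that exhibiting either $\dim R/I>20$ or a nontrivial class in $H_1(\mathrm{Kos}_*(f))$ would finish. But the final sentence of your argument hinges on ``a positive output'' from a Gr\"obner basis computation that you yourself flag as being at the edge of tractability over $30$ variables, and you give no certificate. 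Until the syzygy or the dimension count is actually produced and verified over characteristic $0$, this is a strategy, not a proof; the burden of the proposition is precisely to exhibit the obstruction.

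The paper avoids the computer entirely. It uses \Cref{Cor_Subseq} to observe that if the full set $\{f_{i,j}\}_{i+1<j}$ were regular, so would be the seven-element subsequence $\{f_{1,3},f_{1,4},f_{2,4},f_{2,5},f_{3,5},f_{3,6},f_{4,6}\}$, which would then have codimension $7$. Specializing $x_{2,3}=x_{4,5}=y_{2,3}=y_{4,5}=0$ kills five of these seven entries of $[X,Y]$ outright and collapses the remaining two to explicit quadrics, so the ideal they generate sits inside one generated by six elements, hence has codimension $\le 6$ by \Cref{Thm_KrullPIT} --- a contradiction. This is a short, characteristic-free hand argument; it also scales (as the paper remarks) to all $n\ge 6$ by embedding a $6\times 6$ block, whereas a Macaulay2 run certifies only the single case it was run on. If you want to pursue your route, you need to either present the output of the computation (a concrete non-Koszul syzygy among the $f_{i,j}$, checked over $\mathbb{Q}$, or a verified dimension count), or find a hand-checkable obstruction of the kind the paper exhibits. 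Two smaller points: your $f_{i,j}$ are not all quadratic --- under the internal grading they have degree $j-i\in\{2,3,4,5\}$ --- and $[X,Y]$ is not literally $XYX^{-1}Y^{-1}-I$ as a matrix, though the two have the same strictly-upper entries, which is all you use.
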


\begin{proof}
Consider the generic $6\times6$ unipotent matrices
\begin{align*}
X=\begin{pmatrix}1&x_{1,2}&x_{1,3}&x_{1,4}&x_{1,5}&x_{1,6}\\
&1&x_{2,3}&x_{2,4}&x_{2,5}&x_{2,6}\\
&&1&x_{3,4}&x_{3,5}&x_{3,6}\\
&&&1&x_{4,5}&x_{4,6}\\ &&&&1&x_{5,6}\\ &&&&&1
\end{pmatrix},\;\;
&
Y=\begin{pmatrix}1&y_{1,2}&y_{1,3}&y_{1,4}&y_{1,5}&y_{1,6}\\
&1&y_{2,3}&y_{2,4}&y_{2,5}&y_{2,6}\\
&&1&y_{3,4}&y_{3,5}&y_{3,6}\\
&&&1&y_{4,5}&y_{4,6}\\ &&&&1&y_{5,6}\\ &&&&&1
\end{pmatrix},
\end{align*}
and let
\begin{displaymath}
f=\begin{pmatrix}0&f_{1,2}&\textcolor{red}{f_{1,3}}&\textcolor{red}{f_{1,4}}&f_{1,5}&f_{1,6}\\
&0&f_{2,3}&\textcolor{red}{f_{2,4}}&\textcolor{red}{f_{2,5}}&f_{2,6}\\
&&0&f_{3,4}&\textcolor{red}{f_{3,5}}&\textcolor{red}{f_{3,6}}\\
&&&0&f_{4,5}&\textcolor{red}{f_{4,6}}\\ &&&&0&f_{5,6}\\ &&&&&0
\end{pmatrix}:=XYX^{-1}Y^{-1}-I.
\end{displaymath}
Notice that we have $f_{1,2}=f_{2,3}=f_{3,4}=f_{4,5}=f_{5,6}=0$ on the subdiagonal, and $C(U_6)$ has a presentation (as in \Cref{Eq_ExplicitCoordinateRing})
\begin{equation}
C(U_6)=\Spec\frac{k[x_{i,j},y_{i,j}]}{(\{f_{i,j}\}_{1\leq i<j\leq 6})}=\Spec\frac{k[x_{i,j},y_{i,j}]}{(\{f_{i,j}\}_{1\leq i<j\leq 6,\,i+1<j})}.
\end{equation}

Suppose that $C(U_6)$ is a complete intersection.
By \Cref{Cor_CI}, the set of generators $\displaystyle\{f_{i,j}\}_{0\leq i+1<j\leq n}$ of $C(U_6)$ forms a regular sequence.
Thus by \Cref{Cor_Subseq}, its subsequence $\displaystyle\{f_{1,3},f_{1,4},f_{2,4},f_{2,5},f_{3,5},f_{3,6},f_{4,6}\}$ is again a regular sequence, so
\begin{displaymath}
\mathrm{codim}~(f_{1,3},f_{1,4},f_{2,4},f_{2,5},f_{3,5},f_{3,6},f_{4,6})=7.
\end{displaymath}

Consider the natural map
\begin{displaymath}
q:k[X,Y]\to k[X,Y]/(x_{2,3},x_{4,5},y_{2,3},y_{4,5}),
\end{displaymath}
and let the matrices
\begin{align*}
\bar{X}=\begin{pmatrix}1&x_{1,2}&x_{1,3}&x_{1,4}&x_{1,5}&x_{1,6}\\
&1&0&x_{2,4}&x_{2,5}&x_{2,6}\\
&&1&x_{3,4}&x_{3,5}&x_{3,6}\\
&&&1&0&x_{4,6}\\ &&&&1&x_{5,6}\\ &&&&&1
\end{pmatrix},\;\;
&
\bar{Y}=\begin{pmatrix}1&y_{1,2}&y_{1,3}&y_{1,4}&y_{1,5}&y_{1,6}\\
&1&0&y_{2,4}&y_{2,5}&y_{2,6}\\
&&1&y_{3,4}&y_{3,5}&y_{3,6}\\
&&&1&0&y_{4,6}\\ &&&&1&y_{5,6}\\ &&&&&1
\end{pmatrix},
\end{align*}
be the images of $X,Y$ under the quotient map.
One then has
\begin{equation}\label{Eq_Issue}
\bar{X}\bar{Y}\bar{X}^{-1}\bar{Y}^{-1}=
\begin{pmatrix}1&0&\color{red}0\color{black}&\color{red}\substack{x_{1,2}y_{2,4}+x_{1,3}y_{3,4}\\ -x_{3,4}y_{1,3}-x_{2,4}y_{1,2}}\color{black}&*&*\\
&1&0&\color{red}0\color{black}&\color{red}0\color{black}&*\\
&&1&0&\color{red}0\color{black}&\color{red}\substack{x_{3,4}y_{4,6}+x_{3,5}y_{5,6}\\ -x_{5,6}y_{3,5}-x_{4,6}y_{3,4}}\color{black}\\
&&&1&0&\color{red}0\color{black}\\ &&&&1&0\\ &&&&&1\end{pmatrix}.
\end{equation}
\Cref{Eq_Issue} implies the image of $(f_{1,3},f_{1,4},f_{2,4},f_{2,5},f_{3,5},f_{3,6},f_{4,6})$ under $q$ is
\begin{displaymath}
(x_{1,2}y_{2,4}+x_{1,3}y_{3,4}-x_{3,4}y_{1,3}-x_{2,4}y_{1,2})+(x_{3,4}y_{4,6}+x_{3,5}y_{5,6}-x_{5,6}y_{3,5}-x_{4,6}y_{3,4}).
\end{displaymath}

Thus the ideal generated by $f_{1,3},f_{1,4},f_{2,4},f_{2,5},f_{3,5},f_{3,6},f_{4,6}$ is contained in the ideal
\begin{displaymath}
(x_{2,3},x_{4,5},y_{2,3},y_{4,5})+(x_{1,2}y_{2,4}+x_{1,3}y_{3,4}-x_{3,4}y_{1,3}-x_{2,4}y_{1,2})+(x_{3,4}y_{4,6}+x_{3,5}y_{5,6}-x_{5,6}y_{3,5}-x_{4,6}y_{3,4}).
\end{displaymath}
This ideal clearly has codimension $\leq6$ (By Krull's principal ideal theorem \cite{Eisenbud95}*{Theorem 10.2}, because the ideal is generated by $6$ elements), hence we arrive at a contradiction.

Notice that we actually show that the codimension of $C(U_6)$ in $U_6\times U_6$ is strictly less than $10$, and in $U_6\times U_6$ \emph{there are at least $10$ equation required to compare entries}, hence
$C(U_6)$ is not a complete intersection.
\end{proof}

\begin{rmk}
The proof of \Cref{Proposition_U6NonCI} also shows that $U_n$ for $n\geq6$ are not complete intersection of the correct dimension, since for bigger $n$, the matrix generator (see \Cref{Eg_MatrixGenerator}) of $U_n$ contains a $6\times6$ matrix as its submatrix.
\end{rmk}

\begin{rmk}
The argument of \Cref{Proposition_U6NonCI} does not work when $n=5$, since if the matrix is smaller by $1$, the last column of \Cref{Eq_Issue} disappears, and the ideal generated by the subsequence $\{f_{1,3},f_{1,4},f_{2,4},f_{2,5},f_{3,5}\}$ is of codimension $5$, which is contained in the ideal $(x_{2,3},x_{4,5},y_{2,3},y_{4,5})+(x_{1,2}y_{2,4}+x_{1,3}y_{3,4}-x_{3,4}y_{1,3}-x_{2,4}y_{1,2})$ of codimension $5$.
\end{rmk}


@article {MT55,
    AUTHOR = {Motzkin, T. S. and Taussky, Olga},
     TITLE = {Pairs of matrices with property {$L$}. {II}},
   JOURNAL = {Trans. Amer. Math. Soc.},
  FJOURNAL = {Transactions of the American Mathematical Society},
    VOLUME = {80},
      YEAR = {1955},
     PAGES = {387--401},
      ISSN = {0002-9947,1088-6850},
   MRCLASS = {15.0X},
  MRNUMBER = {86781},
MRREVIEWER = {H.\ Schwerdtfeger},
       DOI = {10.2307/1992996},
       URL = {https://doi.org/10.2307/1992996},
}

@article {G61,
    AUTHOR = {Gerstenhaber, Murray},
     TITLE = {On dominance and varieties of commuting matrices},
   JOURNAL = {Ann. of Math. (2)},
  FJOURNAL = {Annals of Mathematics. Second Series},
    VOLUME = {73},
      YEAR = {1961},
     PAGES = {324--348},
      ISSN = {0003-486X},
   MRCLASS = {16.48},
  MRNUMBER = {132079},
MRREVIEWER = {S.\ A.\ Amitsur},
       DOI = {10.2307/1970336},
       URL = {https://doi.org/10.2307/1970336},
}

@book {Q67,
    AUTHOR = {Quillen, Daniel G.},
     TITLE = {Homotopical algebra},
    SERIES = {Lecture Notes in Mathematics},
    VOLUME = {No. 43},
 PUBLISHER = {Springer-Verlag, Berlin-New York},
      YEAR = {1967},
     PAGES = {iv+156 pp. (not consecutively paged)},
   MRCLASS = {18.20 (55.00)},
  MRNUMBER = {223432},
MRREVIEWER = {A.\ K.\ Bousfield},
}

@article {Quillen69,
    AUTHOR = {Quillen, Daniel},
     TITLE = {Rational homotopy theory},
   JOURNAL = {Ann. of Math. (2)},
  FJOURNAL = {Annals of Mathematics. Second Series},
    VOLUME = {90},
      YEAR = {1969},
     PAGES = {205--295},
      ISSN = {0003-486X},
   MRCLASS = {55.40},
  MRNUMBER = {258031},
MRREVIEWER = {J.\ F.\ Adams},
       DOI = {10.2307/1970725},
       URL = {https://doi.org/10.2307/1970725},
}

@article {R69,
    AUTHOR = {Richardson, R. W.},
     TITLE = {Commuting varieties of semisimple {L}ie algebras and algebraic
              groups},
   JOURNAL = {Compositio Math.},
  FJOURNAL = {Compositio Mathematica},
    VOLUME = {38},
      YEAR = {1979},
    NUMBER = {3},
     PAGES = {311--327},
      ISSN = {0010-437X,1570-5846},
   MRCLASS = {17B20 (14L30 22E47)},
  MRNUMBER = {535074},
MRREVIEWER = {Wim\ H.\ Hesselink},
       URL = {http://www.numdam.org/item?id=CM_1979__38_3_311_0},
}

@book {Matsumura89,
    AUTHOR = {Matsumura, Hideyuki},
     TITLE = {Commutative ring theory},
    SERIES = {Cambridge Studies in Advanced Mathematics},
    VOLUME = {8},
   EDITION = {Second},
      NOTE = {Translated from the Japanese by M. Reid},
 PUBLISHER = {Cambridge University Press, Cambridge},
      YEAR = {1989},
     PAGES = {xiv+320},
      ISBN = {0-521-36764-6},
   MRCLASS = {13-01},
  MRNUMBER = {1011461},
}

@book {Weibel94,
    AUTHOR = {Weibel, Charles A.},
     TITLE = {An introduction to homological algebra},
    SERIES = {Cambridge Studies in Advanced Mathematics},
    VOLUME = {38},
 PUBLISHER = {Cambridge University Press, Cambridge},
      YEAR = {1994},
     PAGES = {xiv+450},
      ISBN = {0-521-43500-5; 0-521-55987-1},
   MRCLASS = {18-01 (16-01 17-01 20-01 55Uxx)},
  MRNUMBER = {1269324},
MRREVIEWER = {Kenneth A. Brown},
       DOI = {10.1017/CBO9781139644136},
       URL = {https://doi.org/10.1017/CBO9781139644136},
}

@book {Eisenbud95,
    AUTHOR = {Eisenbud, David},
     TITLE = {Commutative algebra},
    SERIES = {Graduate Texts in Mathematics},
    VOLUME = {150},
      NOTE = {With a view toward algebraic geometry},
 PUBLISHER = {Springer-Verlag, New York},
      YEAR = {1995},
     PAGES = {xvi+785},
      ISBN = {0-387-94268-8; 0-387-94269-6},
   MRCLASS = {13-01 (14A05)},
  MRNUMBER = {1322960},
MRREVIEWER = {Matthew\ Miller},
       DOI = {10.1007/978-1-4612-5350-1},
       URL = {https://doi.org/10.1007/978-1-4612-5350-1},
}

@book {K96,
    AUTHOR = {Keeton, Allan Gerald},
     TITLE = {Commuting varieties associated with symmetric pairs},
      NOTE = {Thesis (Ph.D.)--University of California, San Diego},
 PUBLISHER = {ProQuest LLC, Ann Arbor, MI},
      YEAR = {1996},
     PAGES = {92},
   MRCLASS = {99-05},
  MRNUMBER = {2693854},
       URL =
              {http://gateway.proquest.com/openurl?url_ver=Z39.88-2004&rft_val_fmt=info:ofi/fmt:kev:mtx:dissertation&res_dat=xri:pqdiss&rft_dat=xri:pqdiss:9616310},
}

@article {B01,
    AUTHOR = {Baranovsky, V.},
     TITLE = {The variety of pairs of commuting nilpotent matrices is
              irreducible},
   JOURNAL = {Transform. Groups},
  FJOURNAL = {Transformation Groups},
    VOLUME = {6},
      YEAR = {2001},
    NUMBER = {1},
     PAGES = {3--8},
      ISSN = {1083-4362,1531-586X},
   MRCLASS = {14L30 (14C05 15A27 15A72)},
  MRNUMBER = {1825165},
MRREVIEWER = {A.\ A.\ Iarrobino, Jr.},
       DOI = {10.1007/BF01236059},
       URL = {https://doi.org/10.1007/BF01236059},
}

@article {K01,
    AUTHOR = {Kapranov, M.},
     TITLE = {Injective resolutions of {$BG$} and derived moduli spaces of
              local systems},
   JOURNAL = {J. Pure Appl. Algebra},
  FJOURNAL = {Journal of Pure and Applied Algebra},
    VOLUME = {155},
      YEAR = {2001},
    NUMBER = {2-3},
     PAGES = {167--179},
      ISSN = {0022-4049,1873-1376},
   MRCLASS = {18G50 (14A20 14D15)},
  MRNUMBER = {1801413},
MRREVIEWER = {Andrzej\ Kozlowski},
       DOI = {10.1016/S0022-4049(99)00109-7},
       URL = {https://doi.org/10.1016/S0022-4049(99)00109-7},
}

@article {Bas03,
    AUTHOR = {Basili, Roberta},
     TITLE = {On the irreducibility of commuting varieties of nilpotent
              matrices},
   JOURNAL = {J. Algebra},
  FJOURNAL = {Journal of Algebra},
    VOLUME = {268},
      YEAR = {2003},
    NUMBER = {1},
     PAGES = {58--80},
      ISSN = {0021-8693,1090-266X},
   MRCLASS = {14M99 (14C05 15A27)},
  MRNUMBER = {2004480},
MRREVIEWER = {Dmitri\ I.\ Panyushev},
       DOI = {10.1016/S0021-8693(03)00388-0},
       URL = {https://doi.org/10.1016/S0021-8693(03)00388-0},
}

@book {GM03,
    AUTHOR = {Gelfand, Sergei I. and Manin, Yuri I.},
     TITLE = {Methods of homological algebra},
    SERIES = {Springer Monographs in Mathematics},
   EDITION = {Second},
 PUBLISHER = {Springer-Verlag, Berlin},
      YEAR = {2003},
     PAGES = {xx+372},
      ISBN = {3-540-43583-2},
   MRCLASS = {18-02 (18Exx 18Gxx 55U35)},
  MRNUMBER = {1950475},
       DOI = {10.1007/978-3-662-12492-5},
       URL = {https://doi.org/10.1007/978-3-662-12492-5},
}

@article {K05,
    AUTHOR = {Knutson, Allen},
     TITLE = {Some schemes related to the commuting variety},
   JOURNAL = {J. Algebraic Geom.},
  FJOURNAL = {Journal of Algebraic Geometry},
    VOLUME = {14},
      YEAR = {2005},
    NUMBER = {2},
     PAGES = {283--294},
      ISSN = {1056-3911,1534-7486},
   MRCLASS = {14M99 (17B20)},
  MRNUMBER = {2123231},
MRREVIEWER = {Dmitri\ I.\ Panyushev},
       DOI = {10.1090/S1056-3911-04-00389-3},
       URL = {https://doi.org/10.1090/S1056-3911-04-00389-3},
}

@incollection {GS07,
    AUTHOR = {Goerss, Paul and Schemmerhorn, Kristen},
     TITLE = {Model categories and simplicial methods},
 BOOKTITLE = {Interactions between homotopy theory and algebra},
    SERIES = {Contemp. Math.},
    VOLUME = {436},
     PAGES = {3--49},
 PUBLISHER = {Amer. Math. Soc., Providence, RI},
      YEAR = {2007},
      ISBN = {978-0-8218-3814-3},
   MRCLASS = {18G55 (18G10 55U35)},
  MRNUMBER = {2355769},
MRREVIEWER = {Timothy\ Porter},
       DOI = {10.1090/conm/436/08403},
       URL = {https://doi.org/10.1090/conm/436/08403},
}

@article {TV08,
    AUTHOR = {To\"{e}n, Bertrand and Vezzosi, Gabriele},
     TITLE = {Homotopical algebraic geometry. {II}. {G}eometric stacks and
              applications},
   JOURNAL = {Mem. Amer. Math. Soc.},
  FJOURNAL = {Memoirs of the American Mathematical Society},
    VOLUME = {193},
      YEAR = {2008},
    NUMBER = {902},
     PAGES = {x+224},
      ISSN = {0065-9266,1947-6221},
   MRCLASS = {14A20 (18F10 18F20 18G55 55P42 55U40)},
  MRNUMBER = {2394633},
MRREVIEWER = {Paul\ Arne\ \O stv\ae r},
       DOI = {10.1090/memo/0902},
       URL = {https://doi.org/10.1090/memo/0902},
}

@book {GJ09,
    AUTHOR = {Goerss, Paul G. and Jardine, John F.},
     TITLE = {Simplicial homotopy theory},
    SERIES = {Modern Birkh\"{a}user Classics},
      NOTE = {Reprint of the 1999 edition [MR1711612]},
 PUBLISHER = {Birkh\"{a}user Verlag, Basel},
      YEAR = {2009},
     PAGES = {xvi+510},
      ISBN = {978-3-0346-0188-7},
   MRCLASS = {55U10 (18G55)},
  MRNUMBER = {2840650},
       DOI = {10.1007/978-3-0346-0189-4},
       URL = {https://doi.org/10.1007/978-3-0346-0189-4},
}

@article {BKR13,
    AUTHOR = {Berest, Yuri and Khachatryan, George and Ramadoss, Ajay},
     TITLE = {Derived representation schemes and cyclic homology},
   JOURNAL = {Adv. Math.},
  FJOURNAL = {Advances in Mathematics},
    VOLUME = {245},
      YEAR = {2013},
     PAGES = {625--689},
      ISSN = {0001-8708,1090-2082},
   MRCLASS = {14A22 (16E40 16S38 18G55 19D55)},
  MRNUMBER = {3084440},
MRREVIEWER = {Alex\ Martsinkovsky},
       DOI = {10.1016/j.aim.2013.06.020},
       URL = {https://doi.org/10.1016/j.aim.2013.06.020},
}

@incollection {BFR14,
    AUTHOR = {Berest, Yuri and Felder, Giovanni and Ramadoss, Ajay},
     TITLE = {Derived representation schemes and noncommutative geometry},
 BOOKTITLE = {Expository lectures on representation theory},
    SERIES = {Contemp. Math.},
    VOLUME = {607},
     PAGES = {113--162},
 PUBLISHER = {Amer. Math. Soc., Providence, RI},
      YEAR = {2014},
      ISBN = {978-0-8218-9140-7},
   MRCLASS = {16E45 (16E40 18G55 53D30 58B34)},
  MRNUMBER = {3204869},
MRREVIEWER = {Wolfgang\ Rump},
       DOI = {10.1090/conm/607/12078},
       URL = {https://doi.org/10.1090/conm/607/12078},
}

@article {Bas17,
    AUTHOR = {Basili, Roberta},
     TITLE = {On commuting varieties of upper triangular matrices},
   JOURNAL = {Comm. Algebra},
  FJOURNAL = {Communications in Algebra},
    VOLUME = {45},
      YEAR = {2017},
    NUMBER = {4},
     PAGES = {1533--1541},
      ISSN = {0092-7872,1532-4125},
   MRCLASS = {14M10 (14L30 15A27 15B05)},
  MRNUMBER = {3576674},
MRREVIEWER = {M\'{a}ty\'{a}s\ Domokos},
       DOI = {10.1080/00927872.2016.1178272},
       URL = {https://doi.org/10.1080/00927872.2016.1178272},
}

@article {BFPRW17,
    AUTHOR = {Berest, Yuri and Felder, Giovanni and Patotski, Sasha and
              Ramadoss, Ajay C. and Willwacher, Thomas},
     TITLE = {Representation homology, {L}ie algebra cohomology and the
              derived {H}arish-{C}handra homomorphism},
   JOURNAL = {J. Eur. Math. Soc. (JEMS)},
  FJOURNAL = {Journal of the European Mathematical Society (JEMS)},
    VOLUME = {19},
      YEAR = {2017},
    NUMBER = {9},
     PAGES = {2811--2893},
      ISSN = {1435-9855,1435-9863},
   MRCLASS = {16G99 (16E40 16E45 17B63 18D50)},
  MRNUMBER = {3692888},
MRREVIEWER = {Jon\ Eivind\ Vatne},
       DOI = {10.4171/JEMS/729},
       URL = {https://doi.org/10.4171/JEMS/729},
}

@article {BRY19,
    AUTHOR = {Berest, Yuri and Ramadoss, Ajay C. and Yeung, Wai-Kit},
     TITLE = {Vanishing theorems for representation homology and the derived
              cotangent complex},
   JOURNAL = {Algebr. Geom. Topol.},
  FJOURNAL = {Algebraic \& Geometric Topology},
    VOLUME = {19},
      YEAR = {2019},
    NUMBER = {1},
     PAGES = {281--339},
      ISSN = {1472-2747,1472-2739},
   MRCLASS = {14D20 (14A20 14F17 14F35 14L17 14L24 18G55 57M07)},
  MRNUMBER = {3910582},
MRREVIEWER = {Wolfgang\ Rump},
       DOI = {10.2140/agt.2019.19.281},
       URL = {https://doi.org/10.2140/agt.2019.19.281},
}

@article {BRY22,
    AUTHOR = {Berest, Yuri and Ramadoss, Ajay C. and Yeung, Wai-Kit},
     TITLE = {Representation homology of topological spaces},
   JOURNAL = {Int. Math. Res. Not. IMRN},
  FJOURNAL = {International Mathematics Research Notices. IMRN},
      YEAR = {2022},
    NUMBER = {6},
     PAGES = {4093--4180},
      ISSN = {1073-7928,1687-0247},
   MRCLASS = {55N35 (14A30 18N40)},
  MRNUMBER = {4391884},
MRREVIEWER = {Daniel\ Graves},
       DOI = {10.1093/imrn/rnaa023},
       URL = {https://doi.org/10.1093/imrn/rnaa023},
}

@article {BRY22-2,
    AUTHOR = {Berest, Yuri and Ramadoss, Ajay C. and Yeung, Wai-Kit},
     TITLE = {Representation homology of simply connected spaces},
   JOURNAL = {J. Topol.},
  FJOURNAL = {Journal of Topology},
    VOLUME = {15},
      YEAR = {2022},
    NUMBER = {2},
     PAGES = {692--744},
      ISSN = {1753-8416,1753-8424},
   MRCLASS = {14A30 (14D20 14L24 17B56 18A25 55P62)},
  MRNUMBER = {4441602},
MRREVIEWER = {Antonio\ J.\ Garv\'{\i}n},
       DOI = {10.1112/topo.12231},
       URL = {https://doi.org/10.1112/topo.12231},
}

@Misc{M2,
          author = {Grayson, Daniel R. and Stillman, Michael E.},
          title = {Macaulay2, a software system for research in algebraic geometry},
          howpublished = {Available at \url{http://www2.macaulay2.com}}
        }

@misc{li2024package,
      title={A Step towards Computational Derived Algebraic Geometry: The RepHomology Package For Macaulay2}, 
      author={Guanyu Li},
      year={2024},
      note={arXiv:2410.18383},
      archivePrefix={arXiv},
}
\end{document}